\definecolor{brown}{cmyk}{0,0.83,1,0.6}
\newcommand{\rmn}[1]{\if#11I\else {\if#12I\hspace{-0.12ex}I\hspace{-0.08ex}\else {\if #13I\hspace{-0.12ex}I\hspace{-0.12ex}I\hspace{-1.5ex} \else{\if#14I\hspace{-0.16ex}V\hspace{-2.4ex} \else{\if#15V\hspace{-3.0ex} \else{\if#16V\hspace{-0.12ex}I\hspace{-3.0ex} \else{\if#16V\hspace{-0.12ex}I  \else{\if#17V\hspace{-0.12ex}I\hspace{-0.12ex}I\hspace{-3.0ex}  \else{\if#18V\hspace{-0.12ex}I\hspace{-0.12ex}I\hspace{-0.12ex}I\hspace{-3.8ex} \else{\if#19I\hspace{-0.12ex}X\hspace{-3.8ex} \fi}\fi} \fi}\fi}\fi}\fi} \fi} \fi} \fi}\fi}
 \newcommand{\vc}[1]{{\boldsymbol #1}}
 \newcommand{\vcn}[1]{{\bf #1}}
 \newcommand{\sr}[1]{{\mathcal #1}}
 \newcommand{\dd}[1]{\mathbb{#1}}
 \newcommand{\eqn}[1]{(\ref{eqn:#1})}
 \newcommand{\lem}[1]{Lemma~\ref{lem:#1}}
 \newcommand{\cor}[1]{Corollary~\ref{cor:#1}}
 \newcommand{\thr}[1]{Theorem~\ref{thr:#1}}
 \newcommand{\rem}[1]{Remark~\ref{rem:#1}}
 \newcommand{\fig}[1]{Figure~\ref{fig:#1}}
\newcommand{\app}[1]{Appendix~\ref{app:#1}}
\newcommand{\sectn}[1]{Section~\ref{sec:#1}}
\newcommand{\lemt}[1]{\ref{lem:#1}}
 \newcommand{\pend}{\hfill \thicklines \framebox(6.6,6.6)[l]{}}
 \newenvironment{proof}{\noindent {\sc  Proof.} \rm}{\pend}
 \newenvironment{proof*}[1]{\noindent {\sc  #1} \rm}{\pend}
 \newtheorem{theorem}{Theorem}[section]
 \newtheorem{lemma}{Lemma}[section]
 \newtheorem{remark}{Remark}[section]
 \newtheorem{corollary}{Corollary}[section]
 \newtheorem{definition}{Definition}[section]
 \newcommand{\setnewcounter} {
 \setcounter{subsection}{0}
 \setcounter{equation}{0}
 \setcounter{conjecture}{0}
 \setcounter{assumption}{0}
 \setcounter{question}{0}
 \setcounter{definition}{0}
 \setcounter{theorem}{0}
 \setcounter{corollary}{0}
 \setcounter{lemma}{0}
 \setcounter{proposition}{0}
 \setcounter{remark}{0}
}
\newenvironment{mylist}[1]{\begin{list}{}
{\setlength{\itemindent}{#1mm}}
{\setlength{\itemsep}{0ex plus 0.2ex}}
{\setlength{\parsep}{0.5ex plus 0.2ex}}
{\setlength{\labelwidth}{10mm}}
}{\end{list}}
 \title{\bf Structure-reversibility of a two dimensional reflecting random walk and its application to queueing network}
\author{Masahiro Kobayashi$^{a}$, Masakiyo Miyazawa$^{a}$ and Hiroshi Shimizu$^{b}$
\\ {\small $^{a}$Department of Information Sciences, Tokyo University of Science}
\\  
{\small $^{b}$Nihon Unisys, Ltd.}
}
\date{Revised version, Mar. 13, 2014}
\begin{document}
\maketitle
\begin{abstract}
We consider a two dimensional reflecting random walk on the nonnegative integer quadrant. It is assumed that this reflecting random walk has skip free transitions. We are concerned with its time reversed process assuming that the stationary distribution exists. In general, the time reversed process may not be a reflecting random walk. In this paper, we derive necessary and sufficient conditions for the time reversed process also to be a reflecting random walk. These conditions are different from but closely related to the product form of the stationary distribution. 
\end{abstract}

\section{Introduction}
\label{sec:Intro}
We consider a two dimensional reflecting random walk on the nonnegative integer quadrant. We are interested in its stationary distribution for queueing applications. This stationary distribution is generally hard to analytically get, and recent research interests have been directed to its tail asymptotics. We now have good pictures for those tail asymptotics (e.g., see \cite{Miya2011}), but many other characteristics like moments are not available. In this paper, we look at this problem up side down. Namely, we aim to find a class of the reflecting random walks whose stationary distributions are obtained in closed form.

For this, we use a time reversed process for the reflecting random walk, and expect that the stationary distribution is analytically obtained when the time reversed process is also a reflecting random walk. \citet{Kell1979} pioneered to use this time reversed idea for deriving so called product form solutions for various queueing network models. Here, the stationary distribution is said to be a product form solution if it is the product of marginal stationary distributions. This product form solution has been further studied (see, e.g., \cite{ChaoMiyaPine1999,Kell1979,Serf1999} and references therein). However, they are limited in use for applications.

While those traditional approaches use the Markov chains which are specialized to queueing models, we here use a different class of Markov chains. Namely, we take two dimensional reflecting random walks for this class motivated by \cite{Miya2012}. They may be interpreted as queueing models, but our intension is to consider the time reversed processes within a class of two dimensional reflecting random walks. In this way, we study a class of the reflecting random walks which have tractable stationary distributions.

To make clear our arguments, we introduce the notion of structure-reversibility. A reflecting random walk is said to be structure-reversible if it has a stationary distribution and if its time reversed process under this stationary distribution is a reflecting random walk, where its transition probabilities may not be identical with those of the original reflecting random walk. We then derive necessary and sufficient conditions for the reflecting random walk to be structure-reversible under appropriate assumptions. In this derivation, the stationary distribution is simultaneously obtained. This stationary distribution has product form in the interior of the quadrant but may not be of product form on the boundary.

It is notable that our stationary distribution is closely related to the one which was recently obtained by \citet{LatoMiya2013}. They derived it by characterizing a class of the reflecting random walks whose stationary distribution has  product form. In the interior of the quadrant, this two dimensional distribution has geometric marginals whose rates are identical with those of ours. We have geometric interpretations to characterize such decay rates similarly to those of \cite{LatoMiya2013}. However, the stationary distribution under structure-reversibility is not necessary to have product form as we already remarked (see \cor{reversibility_product form} and \rem{LatoMiya2013} for further discussions). We also note that the terminology, ``structure-reversibility'', is used for queueing networks with batch movements in \citet{Miya1997}. Its spirit is the same as the one of the present paper, but the classes of models to be applied are different.

This paper is made up by five sections. We formally define our reflecting random walk and its time reversed process in \sectn{ran_rev}. In \sectn{necessary}, we derive necessary and sufficient conditions of structure-reversibility assuming the existence of the stationary distribution.
It is not easy to check those structure-reversibility conditions since they use the stationary distribution. In \sectn{Geometric_conditions},  we obtain the structure-reversibility conditions not using the stationary distribution, that is, only using modeling parameters.  
We finally in Section 5 discuss a class of queueing networks flexible at boundaries, and give examples to be structure-reversible. 

\section{Two dimensional reflecting random walk and its time-reversed process}
\label{sec:ran_rev}
\setnewcounter
In this section, we briefly introduce a two dimensional reflecting random walk and its time-reversed process. We use the following notations.
\begin{eqnarray*}
\begin{array}{llll}
\mathbb{Z} = \mbox{the set of all integers},
\quad \mathbb{Z}_{+} = \{ i \in \mathbb{Z}; i \ge 0\},
\quad \mathbb{R} = \mbox{the set of all real numbers}.
\end{array}
\end{eqnarray*} 
Let $\sr{S} \equiv \mathbb{Z}_{+}^{2}$ be a state space for the reflecting random walk. We partition it into following subsets.
\begin{eqnarray*}
&&\sr{S}_{0} = \{(0,0)\}, \quad \sr{S}_{1} = \{(i,0) \in \sr{S}; i \ge 1\},\\
&&\sr{S}_{2} = \{(0,j) \in \sr{S}; j \ge 1\}, \quad \sr{S}_{+} = \{(i,j) \in \sr{S}; i,j \ge 1\}.
\end{eqnarray*}
Let $\partial \sr{S} = \cup_{i=0}^{2} \sr{S}_{i}$. Clearly, $\sr{S} = \sr{S}_{+} \cup \partial \sr{S}$. The subsets $\sr{S}_{+}$ and $\partial \sr{S}$ are called an interior and a boundary, and $\sr{S}_{i}$ is called a boundary face for $i = 0,1,2$.

Let $\vc{X}^{(+)} \equiv (X_{1}^{(+)},X_{2}^{(+)})$ be a random vector taking value in $\dd{U} \equiv \{-1,0,1\}^{2}$, 
and for $i=0,1,2$, $\vc{X}^{(i)} \equiv (X^{(i)}_{1}, X^{(i)}_{2})$ be a random vector taking values in $\dd{U}$ such that $\vc{X}^{(0)} \ge (0,0)$, $X^{(1)}_{2} \ge 0$ and $X^{(2)}_{1} \ge 0$. Define $\{\vc{Z}_{\ell}; \ell \in \dd{Z}_{+}\}$ as the Markov chain with state space $\sr{S}$ and the following transition probabilities.
\begin{eqnarray}
\label{eqn:reflecting_random_walk}
\mathbb{P}(\vc{Z}_{\ell + 1} = \vc{n}' | \vc{Z}_{\ell} = \vc{n}) = \left\{
\begin{array}{ll}
\mathbb{P}(\vc{X}^{(+)} = \vc{n}' - \vc{n}), & \vc{n} \in \sr{S}_{+}, \vc{n}' \in \sr{S},\\
\mathbb{P}(\vc{X}^{(i)} = \vc{n}' - \vc{n}), &i=0,1,2, \vc{n} \in \sr{S}_{i}, \vc{n}' \in \sr{S}.
\end{array}
\right.
\end{eqnarray}
By this definition, $\{\vc{Z}_{\ell}\}$ is skip free, and has homogeneous transition probabilities in each boundary face $\sr{S}_{i}$ and the interior $\sr{S}_{+}$. We refer to this Markov chain $\{\vc{Z}_{\ell}\}$ as a two dimensional reflecting random walk. Throughout this paper, we tentatively assume that 
\begin{mylist}{0}
\item[(i)] $\{\vc{Z}_{\ell}\}$ is irreducible.
\end{mylist}

The modeling primitives of this reflecting random walk are given by the four sets of the following probability distributions:
\begin{eqnarray*}
&& p_{ij}^{(0)} = \mathbb{P}(\vc{X}^{(0)} = (i,j)), \qquad i,j=0,1,\\
&& p_{ij}^{(1)} = \mathbb{P}(\vc{X}^{(1)} = (i,j)), \qquad i=0,\pm1, j=0,1,\\
&& p_{ij}^{(2)} = \mathbb{P}(\vc{X}^{(2)} = (i,j)), \qquad i=0,1, j=0,\pm1,\\
&& p_{ij}^{(+)} = \mathbb{P}(\vc{X}^{(+)} = (i,j)), \qquad i, j =0,\pm1.
\end{eqnarray*}
The transition diagram of $\{\vc{Z}_{\ell}\}$ is illustrated in \fig{TransitionProbability}.
\begin{figure}[h]
\begin{center}
\includegraphics[height=0.25\textheight]{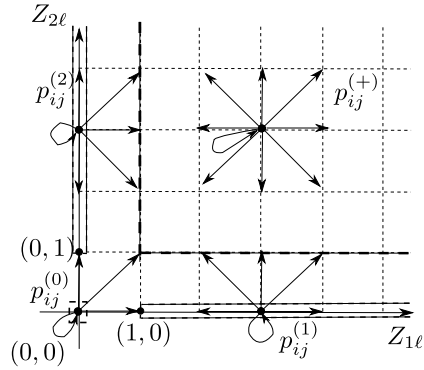}
\end{center}
\caption{Transition diagram of $\{\vc{Z}_{\ell}\}$}
\label{fig:TransitionProbability}
\end{figure}

For the modeling parameter $p_{ij}^{(+)}$, we add the irreducibility assumption.
Let $\{\vc{Y}_{\ell} \in \mathbb{Z}^{2}; \ell \in \mathbb{Z}_{+}\}$ be a two dimensional random walk removing the boundary $\partial {\sr S}$ of $\{\vc{Z}_{\ell}\}$. We note that the distribution  of increments for $\{\vc{Y}_{\ell}\}$ is identical with $p_{ij}^{(+)}$.
For this random walk, we assume the following condition.
\begin{mylist}{0}
\item[(ii)] $\{\vc{Y}_{\ell};\ell \in \mathbb{Z}_{+}\}$ is irreducible.
\end{mylist}
Of course, if $\{\vc{Y}_{\ell}\}$ is not irreducible, the irreducible condition (i) may be still satisfied because of the reflection at the boundary. 
We first note the following fact.

\begin{lemma}
\label{lem:ran_irr}
{\rm
Under the condition (ii), at least one of  $p_{10}^{(+)}$, $p_{(-1)0}^{(+)}$, $p_{01}^{(+)}$ and $p_{0(-1)}^{(+)}$ is positive.
}
\end{lemma}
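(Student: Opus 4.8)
The plan is to argue by contradiction via a parity invariant. Suppose all four of $p_{10}^{(+)}$, $p_{(-1)0}^{(+)}$, $p_{01}^{(+)}$ and $p_{0(-1)}^{(+)}$ vanish. Since $\vc{X}^{(+)}$ takes values in $\dd{U} = \{-1,0,1\}^{2}$, ruling out these four unit steps leaves only the five increments $(0,0)$, $(1,1)$, $(1,-1)$, $(-1,1)$, $(-1,-1)$ with possibly positive probability. I would then record the key elementary fact that every such increment $(a,b)$ satisfies $a+b \in \{-2,0,2\}$, i.e.\ $a+b$ is even.

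Next I would follow the quantity $Y_{\ell,1}+Y_{\ell,2}$ along a trajectory of $\{\vc{Y}_{\ell}\}$, writing $\vc{Y}_{\ell} = (Y_{\ell,1},Y_{\ell,2})$. By the observation above each step changes this sum by an even amount, so $Y_{\ell,1}+Y_{\ell,2} \equiv Y_{0,1}+Y_{0,2} \pmod 2$ for all $\ell$. Hence the walk started at $(0,0)$ stays forever in the sublattice $\{(i,j)\in\dd{Z}^{2}: i+j \text{ even}\}$ and can never reach $(1,0)$. This contradicts condition (ii), which requires every state of $\dd{Z}^{2}$ to be accessible from every other; therefore at least one of the four probabilities must be positive.

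I expect this to be essentially the whole argument. The only point requiring a little care is the bookkeeping at the start — that the support of $\vc{X}^{(+)}$ is contained in $\dd{U}$, so that annihilating the four axial steps genuinely confines the remaining mass to the four diagonal steps and the null step — but there is no real obstacle here; the parity invariant does all the work.
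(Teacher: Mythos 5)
Your proof is correct and follows essentially the same route as the paper: assume all four axial probabilities vanish and derive a contradiction with the irreducibility of $\{\vc{Y}_{\ell}\}$ because the walk can then never reach a neighbouring lattice point such as $(1,0)$. The only difference is that you make explicit the parity invariant of $Y_{\ell,1}+Y_{\ell,2}$ that justifies this inaccessibility, whereas the paper simply asserts it; your version is, if anything, slightly more complete.
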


\begin{proof}
Suppose that our claim is not true, that is, $p_{10}^{(+)}=p_{(-1)0}^{(+)}=p_{01}^{(+)}=p_{0(-1)}^{(+)}=0$. But then, for any $n_{1},n_{2} \in \mathbb{Z}$, if $\vc{Y}_{0} = (n_{1},n_{2})$, then $\vc{Y}_{\ell}$ does not arrive at $(n_{1}+1,n_{2})$ and $(n_{1},n_{2}+1)$ for any $\ell \in \mathbb{Z}_{+}$. This is a contradiction for irreducibility of $\{\vc{Y}_{\ell}\}$. 
\end{proof}

Our problem is to find necessary and sufficient conditions for $\{{\vc{Z}}_{\ell}\}$ under which its time reversal is a reflecting random walk on $\sr{S}$.
For this, we  assume that 
\begin{itemize}
\item[(iii)] $\{\vc{Z}_{\ell}\}$ has the stationary distribution.
\end{itemize}
We denote it by $\pi$. Then, we can construct the stationary Markov chain $\{\vc{Z}_{\ell}; \ell \in \mathbb{Z}\}$ starting from $-\infty$ with $\vc{Z}_{0}$ subject to $\pi$. We define the time-reversed process  $\{\tilde{\vc{Z}}_{\ell}; \ell \in \mathbb{Z}\}$ by
\begin{eqnarray*}
\tilde{\vc{Z}}_{\ell} = \vc{Z}_{-\ell}, \quad \forall \ell \in \mathbb{Z}.
\end{eqnarray*}
It is easy to see that $\{\tilde{\vc{Z}}_{\ell}\}$ is the Markov chain, and transition probability of $\{\tilde{\vc{Z}}_{\ell}\}$ is given by 
\begin{eqnarray}
\label{eqn:transition_of_reversed}
\mathbb{P}(\tilde{\vc{Z}}_{\ell + 1} = \vc{n}' | \tilde{\vc{Z}}_{\ell} = \vc{n}) = \frac{\pi(\vc{n}')}{\pi(\vc{n})} \mathbb{P}(\vc{Z}_{\ell+1} = \vc{n} | \vc{Z}_{\ell} = \vc{n}'), \quad \vc{n},\vc{n}' \in \sr{S}.
\end{eqnarray}
The Markov chain $\{\tilde{\vc{Z}}_{\ell}\}$ is referred to as a time reversed process under $\pi$ (see, e.g., \cite{Asmu2003}). 
\begin{remark}
{\rm 
In \eqn{transition_of_reversed}, we may not require that the $\pi$ is the stationary distribution of $\{\vc{Z}_{\ell}\}$. It is the stationary distribution if and only if 
\begin{eqnarray*}
\label{eqn:necessary_sufficient_stationary}
\sum_{\vc{n}' \in {\sr S}} \mathbb{P}(\tilde{\vc{Z}}_{\ell + 1} = \vc{n}' | \tilde{\vc{Z}}_{\ell} = \vc{n}) = 1,
\end{eqnarray*}
for all $\vc{n} \in {\sr S}$ (see, e.g., \cite{Kell1979,Miya2012}).
}
\end{remark}

\if0
Our problem is to find necessary and sufficient conditions for the stationary distribution $\pi$ to exist under which $\{\tilde{\vc{Z}}_{\ell}\}$ is a reflecting random walk on $\sr{S}$. We assume nothing about the distributions $\{p^{(k)}_{ij}\}$ for $k=0,1,2,+$. For example, $\{p^{(+)}_{ij}\}$ may be singular, that is, the set $\{(i,j) \in \dd{U}; p^{(+)}_{ij} > 0\}$ can not generate $\dd{Z}^{2}$ under addition. Because of the reflection at the boundary, the irreducible condition (i) may be still satisfied.
\fi

\section{Characterization of structure-reversibility}
\setnewcounter
\label{sec:necessary}

A Markov chain is said to be reversible if its time reversed process is stochastically identical with the original Markov chain.
However, this condition is too strong (see, e.g., \cite{Miya2012} for queueing models).
Instead of this reversibility, we consider weaker concept of reversibility for a reflecting random walk.
\begin{definition}
\label{def:quasi-reversibility}
{\rm
The reflecting random walk  $\{\vc{Z}_{\ell}\}$ is said to be structure-reversible if it has a stationary distribution and if its time-reversed process  $\{\tilde{\vc{Z}}_{\ell}\}$ is also a reflecting random walk. 
}
\end{definition}
Thus, if $\{\vc{Z}_{\ell}\}$ has structure-reversibility, then the transition probabilities of $\{\tilde{\vc{Z}}_{\ell}\}$ are given by
\begin{eqnarray}
\label{eqn:RRRW_equation}
\mathbb{P}( \tilde{\vc{Z}}_{\ell + 1} = \vc{n}' | \tilde{\vc{Z}}_{\ell} = \vc{n}) =  \mathbb{P}(\tilde{\vc{X}}^{(i)} = \vc{n}' - \vc{n}), \quad \ell \in \mathbb{Z}, i=0,1,2,+, \vc{n} \in \sr{S}_{i},
\end{eqnarray}
for some random vector $\tilde{\vc{X}}^{(i)} = (\tilde{X}_{1}^{(i)},\tilde{X}_{2}^{(i)})\in \{0,1,-1\}^{2}$.  From the reflection property of $\{\tilde{\vc{Z}}_{\ell}\}$, it is required that $X_{1}^{(i)} \ge 0$ for $i=0,2$ and $X_{2}^{(i)} \ge 0$ for $i=0,1$.
We also note that the distributions of $\vc{X}^{(i)}$ and $\tilde{\vc{X}}^{(i)}$ may not be the same.


\if0
For $i=0,\pm1$ and $j = 0,1$, let $c_{i}^{(1+)}, c_{i}^{(2+)}, c_{j}^{(01)}$ and $c_{j}^{(02)}$ be nonnegative constants such that 
\begin{eqnarray*}
\begin{array}{llll}
 &&c_{i}^{(1+)} = \left\{ 
\begin{array}{ll}
\frac{p_{i1}^{(1)}}{p_{i1}^{(+)}}, & p_{i1}^{(+)} > 0, \\
0, & p_{i1}^{(+)} = 0,
\end{array}
\right.
 &c_{i}^{(2+)} = \left\{ 
\begin{array}{ll}
\frac{p_{1i}^{(2)}}{p_{1i}^{(+)}}, & p_{1i}^{(+)} > 0, \\
0, & p_{1i}^{(+)} = 0,
\end{array}
\right.\\
 &&c_{j}^{(01)} = \left\{ 
\begin{array}{ll}
\frac{p_{1j}^{(0)}}{p_{1j}^{(1)}}, & p_{1j}^{(1)} > 0, \\
0, & p_{1j}^{(1)} = 0,
\end{array}
\right.
 &c_{j}^{(02)} = \left\{ 
\begin{array}{ll}
\frac{p_{j1}^{(0)}}{p_{j1}^{(2)}}, &  p_{j1}^{(2)} > 0, \\
0, & p_{j1}^{(2)} = 0.
\end{array}
\right.
\end{array}
\end{eqnarray*}
\fi
We are now ready to present conditions for structure-reversibility.@
\begin{theorem}
\label{thr:reversibility condition}
{\rm
For the reflecting random walk $\{\vc{Z}_{\ell}\}$, assume the conditions (i), (ii) and (iii). Then $\{\vc{Z}_{\ell}\}$ is structure-reversible if and only if the following conditions hold.
\begin{itemize}
\item[(a1)] There exist $c^{(1+)},c^{(2+)} > 0$  such that $c^{(1+)} p_{i1}^{(+)} = p_{i1}^{(1)}$ and  $c^{(2+)} p_{1i}^{(+)} = p_{1i}^{(2)}$ for any $i=0,\pm1$.
\item[(a2)] There exist $c^{(10)}, c^{(20)} \ge 0$ such that $c^{(10)} p_{1j}^{(1)} = p_{1j}^{(0)}$ and $c^{(20)} p_{j1}^{(2)} = p_{j1}^{(0)}$ for any $j=0,1$. If $c^{(10)} = 0$ (resp. $c^{(20)} = 0$), then $p_{1j}^{(1)} = 0$ (resp. $p_{j1}^{(2)} = 0$) for any $j=0,1$. 
\item[(a3)]If both $c^{(10)} > 0$ and $c^{(20)} > 0$,  then $c^{(10)} c^{(1+)} = c^{(20)}c^{(2+)}$.
\item[(a4)] There exist $\eta_{1},\eta_{2} \in (0,1)$ such that
\begin{eqnarray}
\label{eqn:stationary_dist2_1}
&&\pi(n,0) = \eta_{1}^{n-1} \pi(1,0),  \quad n \ge 1,\\
\label{eqn:stationary_dist2_2}
&&\pi(0,n) = \eta_{2}^{n-1} \pi(0,1), \quad n \ge 1,\\
\label{eqn:stationary_dist3}
&&\pi(n_{1},n_{2}) = \eta_{1}^{n_{1}-1} \eta_{2}^{n_{2}-1} \pi(1,1), \quad n_{1},n_{2} \ge 1.
\end{eqnarray}
If $c^{(10)} > 0$, then
\begin{eqnarray}
\label{eqn:stationary_dist1}
&&\pi(1,0) = c^{(10)} \eta_{1} \pi(0,0), \\
\label{eqn:stationary_equation_c20=0}
&&\pi(0,1) = \frac{c^{(10)} c^{(1+)}}{c^{(2+)}} \eta_{2}\pi (0,0),\\
\label{eqn:stationary_dist11}
&&\pi(1,1) = c^{(10)}c^{(1+)} \eta_{1}\eta_{2} \pi(0,0),
\end{eqnarray}
and if $c^{(20)} > 0$, then 
\begin{eqnarray}
\label{eqn:stationary_equation_c10=0}
&&\pi(1, 0) = \frac{c^{(20)} c^{(2+)}}{c^{(1+)}} \eta_{1}\pi (0,0),\\
\label{eqn:stationary_dist1_2}
&&\pi(0,1) = c^{(20)} \eta_{2} \pi(0,0), \\
\label{eqn:stationary_dist11_2}
&&\pi(1,1) = c^{(20)}c^{(2+)} \eta_{1}\eta_{2} \pi(0,0).
\end{eqnarray}
\end{itemize}
}
\end{theorem}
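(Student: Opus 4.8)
The plan is to prove both directions by working with the stationarity equations for $\pi$ together with the constraint that the time-reversed kernel \eqn{transition_of_reversed} has the reflecting-random-walk form \eqn{RRRW_equation}.

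\textbf{Necessity.} Suppose $\{\vc{Z}_\ell\}$ is structure-reversible, so \eqn{RRRW_equation} holds for some $\tilde{\vc{X}}^{(i)}$. I would first examine the interior. For $\vc{n} = (n_1,n_2)$ with $n_1, n_2 \ge 2$, equation \eqn{transition_of_reversed} reads $\tilde p^{(+)}_{i'-i,\,j'-j} = \frac{\pi(\vc{n}')}{\pi(\vc{n})} p^{(+)}_{i-i',\,j-j'}$, and since both sides are homogeneous in the interior, ratios $\pi(\vc{n}+\vc{e})/\pi(\vc{n})$ must be constant along each coordinate direction; solving the resulting multiplicative relations forces $\pi(n_1,n_2) = C \eta_1^{n_1}\eta_2^{n_2}$ for $n_1,n_2\ge1$ with $\eta_1,\eta_2 \in (0,1)$ (positivity and summability), which is \eqn{stationary_dist3}. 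The constants $\eta_1,\eta_2$ get identified via $\tilde p^{(+)}$ being a probability distribution. Next I would look at transitions from the interior near the boundary, e.g.\ $\vc{n}=(n_1,1)$ to $\vc{n}'=(n_1\pm1,0)$ and $\vc{n}=(1,n_2)$ to $(0,n_2\pm1)$: matching the reflecting form of $\tilde{\vc{Z}}$ on the faces against \eqn{transition_of_reversed} expresses the face transition probabilities $p^{(1)}_{i1}$, $p^{(2)}_{1i}$ as scalar multiples of the interior ones $p^{(+)}_{i1}$, $p^{(+)}_{1i}$ --- this is exactly (a1), with $c^{(1+)},c^{(2+)}$ read off from the $\pi$-ratios crossing the boundary; positivity of these constants uses \lem{ran_irr} and irreducibility (i). A parallel analysis at the corner, using transitions into and out of $(0,0)$, yields (a2) and the relations \eqn{stationary_dist1}--\eqn{stationary_dist11_2}; the degenerate cases $c^{(10)}=0$ or $c^{(20)}=0$ arise when the corresponding face is not entered from $(0,0)$, forcing $p^{(1)}_{1j}=0$ (resp.\ $p^{(2)}_{j1}=0$). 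Finally, (a3) is a consistency condition: when both faces communicate with the corner there are two expressions for $\pi(1,1)/\pi(0,0)$ (one routed through $\sr S_1$, one through $\sr S_2$), and equating them gives $c^{(10)}c^{(1+)} = c^{(20)}c^{(2+)}$.

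\textbf{Sufficiency.} Conversely, given (a1)--(a4), I would define $\pi$ by the formulas in (a4) (choosing whichever of the $c^{(10)}>0$ / $c^{(20)}>0$ branches applies; (a3) guarantees the two branches agree when both hold, and when both $c^{(10)}=c^{(20)}=0$ one needs a small separate argument pinning down $\pi(1,0),\pi(0,1),\pi(1,1),\pi(0,0)$ from (a1) and irreducibility), normalize it, and then define a candidate reflecting random walk $\{\tilde{\vc{Z}}_\ell\}$ via \eqn{transition_of_reversed}. The content is to verify (I) each $\tilde{\vc{X}}^{(i)}$ is a genuine probability distribution supported in $\{-1,0,1\}^2$ with the required sign constraints, and (II) $\pi$ is actually stationary, which by the Remark following \eqn{transition_of_reversed} is equivalent to $\sum_{\vc{n}'}\mathbb P(\tilde{\vc Z}_{\ell+1}=\vc n'\mid \tilde{\vc Z}_\ell=\vc n)=1$ for all $\vc n$. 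Both reduce to finitely many identities once the geometric product form \eqn{stationary_dist3} and the proportionalities (a1), (a2) are substituted --- the $\eta_i$-factors telescope and the $c$-constants cancel in exactly the pattern dictated by (a1)--(a3).

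\textbf{Main obstacle.} The routine part is the interior, where product form drops out immediately. The delicate bookkeeping is at the corner $(0,0)$ and the two faces meeting it: one must carefully enumerate which transitions are possible (this is where \lem{ran_irr} and the irreducibility hypotheses (i),(ii) are essential to rule out or force the vanishing constants), track the $\pi$-ratios along the boundary, and see that the two independent routes from $(0,0)$ to $(1,1)$ force precisely (a3) and no more. I expect the case analysis on the four sign patterns of $(c^{(10)},c^{(20)})$ --- and in particular the subcase $c^{(10)}=c^{(20)}=0$, where $\pi$ on the lower-dimensional pieces must be recovered indirectly --- to be the technically fussiest step, even though each individual verification is an elementary computation.
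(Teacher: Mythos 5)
Your outline follows essentially the same route as the paper: necessity by forcing constant $\pi$-ratios from homogeneity of the reversed kernel (first in $\sr{S}_{+}$, then across the faces, then at the corner, with (a3) as the consistency of the two routes from $(0,0)$ to $(1,1)$), and sufficiency by direct verification that the reversed transition probabilities are homogeneous on each piece. Two points in your sketch, however, gloss over real content. First, in the interior you claim that homogeneity of the reversed kernel makes $\pi(\vc{n}+\vc{e}_{k})/\pi(\vc{n})$ constant "along each coordinate direction," but that inference requires a positive one-step transition in that direction; under (ii) it can happen that, say, $p^{(+)}_{0,1}=p^{(+)}_{0,-1}=0$, and then the vertical ratio cannot be read off directly. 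The paper's \lem{sta_geo} handles this by a case split (its cases \eqn{(D2)}, \eqn{(D3)}) that routes through a diagonal transition such as $p^{(+)}_{(-1)1}>0$ combined with the already-established horizontal ratio, and \lem{sta_geo_bound} needs an analogous detour on the faces when $p^{(1)}_{\pm1,0}=0$. You correctly flag that \lem{ran_irr} and (i), (ii) are needed to control degenerate cases, but you locate the difficulty at the corner, whereas this interior/face case analysis is where a direct version of your argument would actually break down and must be repaired.

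Second, in the sufficiency direction you propose to \emph{construct} $\pi$ from the formulas in (a4), normalize it, and verify stationarity. That is more than the theorem asks: condition (iii) is a standing hypothesis, so $\pi$ already exists and (a4) is a property of it; all that remains is to check, as the paper does, that $\pi(\vc{n}')/\pi(\vc{n})\,\mathbb{P}(\vc{Z}_{\ell+1}=\vc{n}\,|\,\vc{Z}_{\ell}=\vc{n}')$ depends only on $\vc{n}'-\vc{n}$ and the face containing $\vc{n}$ (your construction-plus-uniqueness detour is not wrong, it is just the content of \lem{modeling_primitive} rather than of this theorem). Relatedly, the subcase $c^{(10)}=c^{(20)}=0$ that you set aside for "a small separate argument" is vacuous: under (a2) it would make $\{\vc{Z}_{\ell}\}$ reducible, contradicting (i) (this is \rem{irreducibility}), so at least one of \eqn{stationary_dist1}--\eqn{stationary_dist11} or \eqn{stationary_equation_c10=0}--\eqn{stationary_dist11_2} is always available.
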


\begin{remark}
\label{rem:irreducibility}
{\rm
 Under the condition (a2), if $c^{(10)} = c^{(20)} = 0$, then $\{\vc{Z}_{\ell}\}$ is not irreducible, that is, the condition (i) is not satisfied. Therefore, at least one of $c^{(10)} > 0$ and  $c^{(20)} > 0$ holds, and we can obtain at least one of \eqn{stationary_dist1}--\eqn{stationary_dist11} and \eqn{stationary_equation_c10=0} -- \eqn{stationary_dist11_2} since $c^{(1+)},c^{(2+)} > 0$ under the condition (a1).
}
\end{remark}

\begin{remark}
{\rm
From the condition (a3), if $c^{(10)} > 0$ and $c^{(20)} > 0$, then \eqn{stationary_dist1}--\eqn{stationary_dist11} are identical with \eqn{stationary_equation_c10=0}--\eqn{stationary_dist11_2}.
}
\end{remark}

\begin{remark}
\label{rem:singular version}
{\rm
We can obtain the reversibility condition even if the condition (ii) is not satisfied. Then, the condition (a4) is slightly changed. Such an example is given in \app{singular_random_walk}. 
}

\end{remark}

We now prove \thr{reversibility condition}.
We first verify that the conditions (a1)--(a4) are necessity for structure-reversibility. 

\begin{lemma}
\label{lem:sta_geo}
{\rm
Under the conditions (i), (ii) and (iii), if $\{\tilde{\vc{Z}}_{\ell}\}$ is a reflecting random walk, then the stationary distribution satisfies \eqn{stationary_dist3}.
}
\end{lemma}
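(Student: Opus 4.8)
The plan is to read off the stationary law on the interior $\sr{S}_{+}$ directly from the reversal formula \eqn{transition_of_reversed}, using irreducibility of the free random walk to identify the decay rates. \emph{Step 1 (a constant ratio in the interior).} Since $\{\tilde{\vc{Z}}_{\ell}\}$ is a reflecting random walk, its transitions out of interior states are spatially homogeneous, governed by some increment $\tilde{\vc{X}}^{(+)}\in\dd{U}$ as in \eqn{RRRW_equation}; and for $\vc{n}'\in\sr{S}_{+}$ the transition of $\{\vc{Z}_{\ell}\}$ into $\vc{n}$ is governed by $\vc{X}^{(+)}$. Substituting into \eqn{transition_of_reversed}, for all $\vc{n},\vc{n}'\in\sr{S}_{+}$ with $\vc{a}:=\vc{n}'-\vc{n}\in\dd{U}$ one gets $\mathbb{P}(\tilde{\vc{X}}^{(+)}=\vc{a})\,\pi(\vc{n})=\pi(\vc{n}')\,\mathbb{P}(\vc{X}^{(+)}=-\vc{a})$. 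As $\{\vc{Z}_{\ell}\}$ is irreducible and has a stationary distribution, $\pi(\vc{n})>0$ for all $\vc{n}$; hence $\mathbb{P}(\tilde{\vc{X}}^{(+)}=\vc{a})=0$ whenever $\mathbb{P}(\vc{X}^{(+)}=-\vc{a})=0$, while if $\mathbb{P}(\vc{X}^{(+)}=-\vc{a})>0$ then $\pi(\vc{n}+\vc{a})/\pi(\vc{n})=\mathbb{P}(\tilde{\vc{X}}^{(+)}=\vc{a})/\mathbb{P}(\vc{X}^{(+)}=-\vc{a})=:\rho_{\vc{a}}\in(0,\infty)$ is a positive constant not depending on $\vc{n}$, valid for every $\vc{n}$ with $\vc{n},\vc{n}+\vc{a}\in\sr{S}_{+}$. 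Put $D=\{\vc{a}\in\dd{U}:\mathbb{P}(\vc{X}^{(+)}=-\vc{a})>0\}$.

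\emph{Step 2 (geometric form in the deep interior).} By condition (ii), $\{\vc{Y}_{\ell}\}$ is an irreducible random walk on $\dd{Z}^{2}$, so the support of $\vc{X}^{(+)}$, and hence $D$, generates $\dd{Z}^{2}$ as a group. Fix $N_{0}$ exceeding the sup-norm of every partial sum occurring in some chosen $D$-step representations of $(1,0),(-1,0),(0,1),(0,-1)$, and set $\sr{S}^{\circ}=\{(n_{1},n_{2}):n_{1},n_{2}\ge 2N_{0}\}$. Any two points of $\sr{S}^{\circ}$ are joined by a path of $D$-steps lying in $\{n_{1},n_{2}\ge N_{0}\}\subseteq\sr{S}_{+}$ (change one coordinate at a time using the chosen representations, so partial sums stay within $N_{0}$ of a monotone lattice path between the two points). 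Telescoping the identity of Step~1 along such a path shows that the product of the relevant $\rho_{\vc{a}}$'s equals $\pi(\mathrm{end})/\pi(\mathrm{start})$ and so is path-independent; fixing a base point $\vc{n}_{0}$ and setting $\eta_{i}=\pi(\vc{n}_{0}+\vc{e}_{i})/\pi(\vc{n}_{0})>0$ (with $\vc{e}_{1}=(1,0)$, $\vc{e}_{2}=(0,1)$) gives $\pi(n_{1},n_{2})=C\eta_{1}^{n_{1}}\eta_{2}^{n_{2}}$ on $\sr{S}^{\circ}$ for some $C>0$, and comparing ratios in $\sr{S}^{\circ}$ yields $\rho_{\vc{a}}=\eta_{1}^{a_{1}}\eta_{2}^{a_{2}}$ for all $\vc{a}\in D$.

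\emph{Step 3 (extension to $\sr{S}_{+}$ and conclusion).} Let $g(\vc{n})=\pi(\vc{n})\eta_{1}^{-n_{1}}\eta_{2}^{-n_{2}}$ on $\sr{S}_{+}$. By Steps~1--2, $g(\vc{n}+\vc{a})=g(\vc{n})$ whenever $\vc{a}\in B:=D\cup(-D)$ and both $\vc{n},\vc{n}+\vc{a}\in\sr{S}_{+}$, and $g\equiv C$ on $\sr{S}^{\circ}$; so it suffices to connect each $\vc{n}\in\sr{S}_{+}$ to $\sr{S}^{\circ}$ by a $B$-path that never leaves $\sr{S}_{+}$. The coordinatewise-monotone members of $B$ are available at every point of $\sr{S}_{+}$, and the only non-monotone ones are $\pm(1,-1)$; using once more that $B$ generates $\dd{Z}^{2}$ (so it contains a monotone step raising whichever coordinate equals $1$, with a little extra care at the corner $(1,1)$), one replaces each $(\pm1,\mp1)$-step needed near the lines $n_{1}=1$ or $n_{2}=1$ by a short monotone detour and thereby climbs from any $\vc{n}\in\sr{S}_{+}$ into $\sr{S}^{\circ}$. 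Hence $g$ is constant on $\sr{S}_{+}$, i.e. $\pi(n_{1},n_{2})=C\eta_{1}^{n_{1}}\eta_{2}^{n_{2}}$ for all $n_{1},n_{2}\ge1$; rewriting $C=\eta_{1}^{-1}\eta_{2}^{-1}\pi(1,1)$ gives \eqn{stationary_dist3}, and since $\pi$ is a probability distribution with $\pi(1,1)>0$, summability forces $\eta_{1},\eta_{2}\in(0,1)$.

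\emph{Main obstacle.} Step~1 is routine. The real content is the combinatorial argument of Steps~2--3: the ratio identity relates only pairs of \emph{interior} states, so the difficulty --- and the point where condition (ii) is indispensable --- is carrying the geometric law all the way onto the lines $n_{1}=1$ and $n_{2}=1$ rather than merely deep inside $\sr{S}_{+}$. When $\{\vc{Y}_{\ell}\}$ is not irreducible this step breaks near the boundary, which is exactly why condition (a4) must be modified in that case, as in \rem{singular version}.
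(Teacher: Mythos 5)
Your proof is correct and reaches \eqn{stationary_dist3}, but it is organized quite differently from the paper's. The paper invokes \lem{ran_irr} to split into three cases according to which unit coordinate steps lie in the support of $\vc{X}^{(+)}$ (case \eqn{(D1)}: both a horizontal and a vertical unit step; cases \eqn{(D2)}--\eqn{(D3)}: only one direction), reads off $\eta_{1}$ and $\eta_{2}$ directly from those unit steps when available, and in the degenerate cases extracts the missing ratio by composing a diagonal step with the known horizontal one (e.g.\ combining $p^{(+)}_{(-1)1}>0$ with \eqn{geo_decay}). You replace this case analysis by a single lattice-connectivity argument: constant ratios $\rho_{\vc{a}}$ for every increment in the support, path-independence by telescoping, a geometric law deep in the interior because the support generates $\dd{Z}^{2}$, and then a $B$-path from every point of $\sr{S}_{+}$ into the deep interior. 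This is more uniform and makes transparent exactly where condition (ii) enters; the one imprecise spot is the connectivity claim in Step~3: it is \emph{not} true that $B$ must contain a monotone step raising a given coordinate (e.g.\ $B=\{(\pm1,0),(1,-1),(-1,1)\}$ generates $\dd{Z}^{2}$, yet the only step raising $n_{2}$ is $(-1,1)$). The conclusion still holds because, by \lem{ran_irr} and the symmetry of $B$, $B$ contains $\pm\vcn{e}_{i}$ for some $i$, so one can first pad that coordinate until it is large and then climb the other coordinate using whatever step of $B$ increases it (its $i$-th component being at most $1$ in modulus), never leaving $\sr{S}_{+}$; this padding step should be written out rather than gestured at. With that repair your argument is complete and, arguably, cleaner than the paper's case-by-case treatment, which avoids the boundary-proximity issue only by working with explicitly chosen step combinations.
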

\begin{proof}
From \lem{ran_irr}, we divide our proof into the following three cases.
\begin{eqnarray}
\label{eqn:(D1)}
&&\mbox{$p_{i0}^{(+)} > 0$ and $p_{0j}^{(+)} > 0$ for some $i,j = \pm1$.}\\
\label{eqn:(D2)} 
&&\mbox{$p_{i0}^{(+)} > 0$ for some $i =\pm1$ and $p_{0j}^{(+)} = 0$ for all $j=\pm1$.}\\
\label{eqn:(D3)}
&&\mbox{$p_{0i}^{(+)} > 0$ for some $i =\pm1$ and $p_{j0}^{(+)} = 0$ for all $j=\pm1$.}
\end{eqnarray}
The cases \eqn{(D2)} and \eqn{(D3)} are symmetric, and therefore, we prove  \eqn{stationary_dist3} for \eqn{(D1)} and \eqn{(D2)}.

For the case \eqn{(D1)}, we only consider the case that $p_{10}^{(+)} > 0$ and $p_{01}^{(+)} > 0$ since the other cases are similarly proved.
%
%
Consider the transition of the reversed process $\{\tilde{\vc{Z}}_{\ell}\}$ from $\vc{n}$ to $\vc{n}' = \vc{n} - \vcn{e}_{1}$, where $\vcn{e}_{1} = (1,0)$. Then, it follows from \eqn{reflecting_random_walk} and \eqn{transition_of_reversed} for all $\vc{n} - \vcn{e}_{1} \in \sr{S}_{+}$ (also $\vc{n} \in \sr{S}_{+}$) that 
\begin{eqnarray}
\label{eqn:tra_pro}
\mathbb{P}(\tilde{\vc{Z}}_{\ell + 1} = \vc{n} - \vcn{e}_{1}| \tilde{\vc{Z}}_{\ell} = \vc{n}) = \frac{\pi(\vc{n} - \vcn{e}_{1})}{\pi(\vc{n})} \mathbb{P}(\vc{X}^{(+)} = (1,0)) = \frac{\pi(\vc{n} - \vcn{e}_{1})}{\pi(\vc{n})}p_{10}^{(+)}.
\end{eqnarray}
Since $p_{10}^{(+)} > 0$ and $\{\tilde{\vc{Z}}_{\ell}\}$ is a reflecting random walk, the right hand side must be a constant for all $\vc{n}, \vc{n}' \in \sr{S}_{+}$. Denote this constant by $\eta_{1}^{-1}$, i.e.,
\begin{eqnarray}
\label{eqn:geo_decay}
\eta_{1}^{-1} = \frac{\pi(\vc{n} - \vcn{e}_{1})}{\pi(\vc{n})}, \quad \vc{n},\vc{n} - \vcn{e}_{1} \in \sr{S}_{+}.
\end{eqnarray}
Similarly, we can show that  $\frac{\pi(\vc{n} - \vcn{e}_{2})}{ \pi(\vc{n})}$ is a positive constant for all $\vc{n}, \vc{n} - \vcn{e}_{2} \in \sr{S}_{+}$, where $\vcn{e}_{2} = (0,1)$, and denote it by $\eta_{2}^{-1}$. Thus, for $\vc{n} = (n_{1},n_{2}) \in \sr{S}_{+}$, we have
\begin{eqnarray*}
\pi(\vc{n}) = \eta_{1} \pi(\vc{n} - \vcn{e}_{1}) = \eta_{1}^{n_{1} -1} \pi(1,n_{2}) = \eta_{1}^{n_{1} -1} \eta_{2}^{n_{2} -1} \pi(1,1).
\end{eqnarray*}
Obviously, from this equation, we have $\eta_{1},\eta_{2} \in (0,1)$ since $\sum_{\vc{n} \in \sr{S}_{+}} \pi(\vc{n}) \le 1$. 

We next assume the case \eqn{(D2)}.
Since $p_{i0}^{(+)} > 0$ for some $i \in \{-1,1\}$, we also have \eqn{geo_decay} for the case \eqn{(D2)}.
In what follows,  we prove that $\frac{\pi(\vc{n} - \vcn{e}_{2})}{ \pi(\vc{n})}$ also must be  constant for the case \eqn{(D2)}.

First assume both $p_{10}^{(+)}>0$ and $ p_{(-1)0}^{(+)} > 0$.
Under the condition (ii) and the case \eqn{(D2)} with $p_{10}^{(+)}>0$ and $p_{(-1)0}^{(+)} > 0$, it is clear that the conditions $p_{i(-1)}^{(+)}>0$ and $p_{j1}^{(+)} > 0$ hold for some $i,j \in \{-1,1\}$, and we may assume $p_{(-1)1}^{(+)} > 0$ since a proof is similar to the other cases. Then, it again follows from \eqn{reflecting_random_walk} and \eqn{transition_of_reversed} for $\vc{n} = (n_{1},n_{2})\in \sr{S}_{+}$ and $\vc{n}' = \vc{n} + \vcn{e}_{1} -\vcn{e}_{2}$ , we have 
\begin{eqnarray*}
\mathbb{P}(\tilde{\vc{Z}}_{\ell + 1} = \vc{n} + \vcn{e}_{1} -\vcn{e}_{2}| \tilde{\vc{Z}}_{\ell} = \vc{n}) &=& \frac{\pi(\vc{n} + \vcn{e}_{1} - \vcn{e}_{2})}{\pi(\vc{n})} p_{(-1)1}^{(+)}\\
&=& \frac{\pi(\vc{n} + \vcn{e}_{1} - \vcn{e}_{2})}{\pi(\vc{n})} \frac{\pi(\vc{n} + \vcn{e}_{1})}{\pi(\vc{n} +\vcn{e}_{1})} p_{(-1)1}^{(+)}\\
&=& \frac{\pi(\vc{n} + \vcn{e}_{1} - \vcn{e}_{2})}{\pi(\vc{n} + \vcn{e}_{1})} \eta_{1}p_{(-1)1}^{(+)}\\
&=& \frac{\pi(\vc{m} - \vcn{e}_{2})}{\pi(\vc{m})} \eta_{1}p_{(-1)1}^{(+)}, \quad \vc{m},\vc{m} -\vcn{e}_{2} \in \sr{S}_{+},
\end{eqnarray*}
where $\vc{m} \equiv \vc{n} + \vcn{e}_{1}$ and the third equation is obtained by \eqn{geo_decay}.
Since $p_{(-1)1}^{(+)} > 0$ and the left hand side must be independent for $\vc{m}$, we have
\begin{eqnarray*}
\frac{\pi(\vc{m} - \vcn{e}_{2})}{\pi(\vc{m})} = \eta_{2}^{-1},
\end{eqnarray*}
as long as $\vc{m} \in \sr{S}_{+}$. Thus, we have \eqn{stationary_dist3} for case \eqn{(D2)} with $p_{10}^{(+)} > 0$ and $p_{(-1)0}^{(+)} > 0$.

The rest of proof for \eqn{(D2)} is  the cases that $p_{10}^{(+)} > 0$, $p_{(-1)0}^{(+)} = 0$ and $p_{10}^{(+)} = 0$,  $p_{(-1)0}^{(+)} > 0$. These are also symmetric, so we only consider the case $p_{10}^{(+)} > 0$ and $p_{(-1)0}^{(+)} = 0$. Repeatedly, from the condition (ii), we must have $p_{(-1)1}^{(+)} > 0$ and $p_{(-1)(-1)}^{(+)} > 0$ under the conditions $p_{10}^{(+)} > 0$ and $p_{(-1)0}^{(+)} = 0$. Thus, using the same argument of the case for $p_{10}^{(+)}>0$ and $ p_{(-1)0}^{(+)} > 0$, we obtain \eqn{stationary_dist3}.
\end{proof}
\if0
\begin{remark}
{\rm
\label{rem:irr_con}
If $\{\vc{Y}_{\ell}\}$ does not satisfy the condition (ii), either the transition of $\{\vc{Z}_{\ell}\}$ from $(n_{1},n_{2})$ to $(n_{1} + 1,n_{2})$ or from $(n_{1},n_{2})$ to $(n_{1},n_{2}+1)$ always go through the boundary. This situation complicates the proof of \lem{sta_geo}. Hence, we assume the condition (ii).
}
\end{remark}
\fi

\begin{lemma}
\label{lem:sta_geo_bound}
{\rm
Under the same assumptions of \thr{reversibility condition}, if $\{\tilde{\vc{Z}}_{\ell}\}$ is a reflecting random walk, then we have \eqn{stationary_dist2_1} and \eqn{stationary_dist2_2}.
}
\end{lemma}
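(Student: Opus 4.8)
The plan is to reduce to the face $\sr{S}_1$ and then re-run the homogeneity argument of \lem{sta_geo}, now using the geometric form \eqn{stationary_dist3} of $\pi$ already known on the adjacent interior row $\{(i,1):i\ge1\}$. Interchanging the two coordinates (and simultaneously $\sr{S}_1\leftrightarrow\sr{S}_2$, $\vc{X}^{(1)}\leftrightarrow\vc{X}^{(2)}$, $\eta_1\leftrightarrow\eta_2$) turns \eqn{stationary_dist2_1} into \eqn{stationary_dist2_2}, so it is enough to prove $\pi(n,0)=\eta_1^{n-1}\pi(1,0)$ for $n\ge1$. From \lem{sta_geo} (take $n_2=1$ in \eqn{stationary_dist3}) we already have $\pi(n,1)=\eta_1^{n-1}\pi(1,1)$ for all $n\ge1$, and this is the sequence against which $\pi(\cdot,0)$ must be matched.

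First I would observe that condition (ii) makes $\{\vc{Y}_{\ell}\}$ able to decrease its second coordinate, so at least one of $p_{0(-1)}^{(+)}$, $p_{1(-1)}^{(+)}$, $p_{(-1)(-1)}^{(+)}$ is positive; each is the probability of a step of $\{\vc{Z}_{\ell}\}$ from row $1$ down onto $\sr{S}_1$, hence by \eqn{transition_of_reversed} governs a step of $\{\tilde{\vc{Z}}_{\ell}\}$ from $\sr{S}_1$ up into row $1$. If $p_{0(-1)}^{(+)}>0$, then for every $n\ge1$ the reflecting-random-walk property of $\{\tilde{\vc{Z}}_{\ell}\}$ gives $\mathbb{P}(\tilde{\vc{X}}^{(1)}=(0,1))=\mathbb{P}(\tilde{\vc{Z}}_{\ell+1}=(n,1)\mid\tilde{\vc{Z}}_{\ell}=(n,0))=\frac{\pi(n,1)}{\pi(n,0)}p_{0(-1)}^{(+)}$, so $\pi(n,1)/\pi(n,0)$ is constant, and with $\pi(n,1)=\eta_1^{n-1}\pi(1,1)$ this is \eqn{stationary_dist2_1}. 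The case $p_{(-1)(-1)}^{(+)}>0$ is the same with the step $(n,0)\to(n+1,1)$, whose probability is $\frac{\pi(n+1,1)}{\pi(n,0)}p_{(-1)(-1)}^{(+)}=\mathbb{P}(\tilde{\vc{X}}^{(1)}=(1,1))$, again constant in $n\ge1$.

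The delicate case is that only $p_{1(-1)}^{(+)}>0$ among the three. Now the step $(n,0)\to(n-1,1)$ of $\{\tilde{\vc{Z}}_{\ell}\}$ is available only for $n\ge2$ — for $n=1$ the target is in $\sr{S}_2$ and is governed by $\vc{X}^{(2)}$ — so matching $\mathbb{P}(\tilde{\vc{X}}^{(1)}=(-1,1))$ only yields $\pi(n,0)=\eta_1^{n-1}\pi(1,0)$ for $n\ge2$, and $\pi(1,0)$ must be recovered separately. Here I would first rule out the remaining easy possibilities: if $p_{01}^{(1)}>0$ or $p_{11}^{(1)}>0$, matching the relevant $\mathbb{P}(\tilde{\vc{X}}^{(+)}=\cdot)$ for a reversed step from row $1$ onto $\sr{S}_1$ against its (already known) interior value forces \eqn{stationary_dist2_1} for all $n\ge1$; and if $p_{10}^{(1)}>0$ or $p_{(-1)0}^{(1)}>0$, comparing $\mathbb{P}(\tilde{\vc{X}}^{(1)}=(-1,0))$, resp. $\mathbb{P}(\tilde{\vc{X}}^{(1)}=(1,0))$, evaluated at $n=2$ against its value for $n\ge3$ gives $\pi(1,0)=\eta_1^{-1}\pi(2,0)$, closing the gap.

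This leaves the residual subcase in which $\vc{X}^{(1)}$ is supported on $\{(0,0),(-1,1)\}$. There the global balance equations of $\{\vc{Z}_{\ell}\}$ at the states $(n,0)$, in which condition (i) forces the escape probability $1-p_{00}^{(1)}$ to be positive, recover the geometric form of $\pi(\cdot,0)$ on $\{n\ge2\}$ and reduce $\pi(1,0)$ to $\pi(0,1)$; running the symmetric analysis on $\sr{S}_2$ in parallel and closing the resulting two relations at the corner states $(0,0),(1,1)$ then fixes the common constant. I expect this residual configuration to be the main obstacle: the homogeneity of $\{\tilde{\vc{Z}}_{\ell}\}$ by itself constrains $\pi(\cdot,0)$ only on $\{n\ge2\}$, and eliminating the spurious degree of freedom at $n=1$ forces one to combine the balance relations near the origin for both boundary faces with the irreducibility conditions (i) and (ii).
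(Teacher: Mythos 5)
Your decomposition differs from the paper's: you split according to which downward interior step $p_{0(-1)}^{(+)}$, $p_{(-1)(-1)}^{(+)}$, $p_{1(-1)}^{(+)}$ is positive, whereas the paper splits on whether a horizontal step exists on the face $\sr{S}_{1}$ (its cases \eqn{(E1)} and \eqn{(E2)}). Your first two cases, and the four sub-cases of the ``delicate'' case that you dispose of using a positive entry of $\vc{X}^{(1)}$ other than $(-1,1)$, are all correct and are, if anything, cleaner than the paper's chaining argument. The problem is the residual sub-case ($\vc{X}^{(1)}$ supported on $\{(0,0),(-1,1)\}$, only $p_{1(-1)}^{(+)}>0$ among the downward steps), which you correctly identify as the obstacle but do not actually resolve. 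Your proposed closure --- global balance at the states $(n,0)$ plus the symmetric analysis on $\sr{S}_{2}$, ``closed at the corner states'' --- cannot work as stated: balance at $(1,0)$ reads $p_{(-1)1}^{(1)}\pi(1,0)=p_{10}^{(0)}\pi(0,0)+p_{1(-1)}^{(2)}\pi(0,1)$ and involves the boundary parameters $p_{10}^{(0)}$ and $p_{1(-1)}^{(2)}$, which at this stage are unconstrained. Balance holds for \emph{every} positive recurrent reflecting random walk, structure-reversible or not, and for a generic one $\pi(1,0)\ne\eta_{1}^{-1}\pi(2,0)$; so no combination of balance, symmetry and irreducibility alone can force the missing relation. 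Some homogeneity constraint of the reversed walk tying face probabilities to interior ones must be invoked, and your sketch never names one.

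The gap is repairable within your framework. In this residual configuration condition (ii) forces $p_{(-1)0}^{(+)}>0$, since $(-1,0)$ is the only remaining increment with negative coordinate sum; hence \eqn{stationary_dist2_2} holds for all $n\ge1$ by your first (easy) argument applied to $\sr{S}_{2}$. Now evaluate $\mathbb{P}(\tilde{\vc{X}}^{(+)}=(-1,1))$ twice: from $(n_{1},n_{2})$ with $n_{1}\ge2$ it equals $\eta_{1}^{-1}\eta_{2}\,p_{1(-1)}^{(+)}$, while from $(1,n_{2})$ (target in $\sr{S}_{2}$) it equals $\eta_{2}\,\pi(0,1)\pi(1,1)^{-1}p_{1(-1)}^{(2)}$; equating gives $p_{1(-1)}^{(2)}=\pi(1,1)\bigl(\eta_{1}\pi(0,1)\bigr)^{-1}p_{1(-1)}^{(+)}$. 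Likewise $\mathbb{P}(\tilde{\vc{X}}^{(1)}=(-1,1))$ evaluated at $(1,0)$ and at $(n,0)$, $n\ge2$, gives $\pi(0,1)\pi(1,0)^{-1}p_{1(-1)}^{(2)}=\pi(1,1)\pi(2,0)^{-1}p_{1(-1)}^{(+)}$. Eliminating $p_{1(-1)}^{(2)}$ yields $\pi(2,0)=\eta_{1}\pi(1,0)$, closing the case. This is, in substance, what the paper's case \eqn{(E2)} chaining through row $1$ and a horizontal interior step accomplishes: it is reversibility, not balance, that pins down the corner value.
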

\begin{proof}
We only obtain  \eqn{stationary_dist2_1} since \eqn{stationary_dist2_2} is similarly proved. We separately consider the cases such that 
\begin{eqnarray}
\label{eqn:(E1)} 
&&\mbox{Either $p_{10}^{(1)} > 0$ or $p_{(-1)0}^{(1)} > 0$.}\\
\label{eqn:(E2)}
&&\mbox{Both $p_{10}^{(1)} = 0$ and $p_{(-1)0}^{(1)} = 0$.} 
\end{eqnarray}
For \eqn{(E1)}, to change  the proof of the case \eqn{(D1)} in \lem{sta_geo}  from  $\sr{S}_{+}$ into $\sr{S}_{1}$, we have, for some constant $\alpha_{1} \in (0,1)$,
\begin{eqnarray}
\label{eqn:geo_decay_eta}
\alpha_{1}^{-1} = \frac{\pi(\vc{n} - \vcn{e}_{1})}{\pi(\vc{n})}, \quad \vc{n},\vc{n} - \vcn{e}_{1} \in \sr{S}_{1}.
\end{eqnarray}
From the condition (ii), \eqn{geo_decay} and \eqn{geo_decay_eta}, $p_{i(-1)}^{(+)} > 0$ for some $i = 0,\pm1$, and therefore, for $\vc{n} = (n_{1},0) \in \sr{S}_{1}$ and $\vc{n}' = (n_{1}-i,1) \in \sr{S}_{+}$, 
\begin{eqnarray}
\label{eqn:tra_S1_and_S}
\mathbb{P}(\tilde{\vc{Z}}_{\ell + 1} = (n_{1} -i,1)|\tilde{\vc{Z}}_{\ell} = (n_{1},0)) 
= \frac{\pi(n_{1} - i,1)}{\pi(n_{1},0)} p_{i(-1)}^{(+)} = \frac{\eta_{1}^{n_{1} -1-i}\pi(1,1)}{\alpha_{1}^{n_{1}-1} \pi(1,0)} p_{i(-1)}^{(+)}.
\end{eqnarray}
The left hand side of this equation is independent of $n_{1}$ since $\{\tilde{\vc{Z}}_{\ell}\}$ is a reflecting random walk, and therefore,  we have $\alpha_{1} = \eta_{1}$, which implies \eqn{stationary_dist2_1}.

For \eqn{(E2)}, from the irreducible condition (i), 
$p_{i1}^{(1)} > 0$ and $p_{j(-1)}^{(+)} > 0$ for some $i,j = 0,\pm1$. 
Then, from \eqn{reflecting_random_walk}, \eqn{transition_of_reversed} and \eqn{RRRW_equation},  we have, for  $\vc{n} = (n_{1}+ j,0) \in \sr{S}_{1}$ and  $\vc{n}' = (n_{1},1) \in \sr{S}_{+}$,
\begin{eqnarray}
\label{eqn:tilde_transition_+}
\mathbb{P}(\tilde{\vc{Z}}_{\ell + 1} = (n_{1},1) | \tilde{\vc{Z}}_{\ell} = (n_{1}+j,0) ) = \frac{\pi(n_{1},1)}{\pi(n_{1} + j,0)}  p_{j(-1)}^{(+)}.
\end{eqnarray}
Since $\{\vc{Z}_{\ell}\}$ is structure-reversible and $p_{j(-1)}^{(+)} > 0$, the left hand side of this equation is a positive constant which depends on $j$, and denote it by $\tilde{p}_{(-j)1}^{(1)}$. Similarly, for $\vc{n} = (n_{1}+i,1) \in \sr{S}_{+}$ and  $\vc{n}' = (n_{1},0) \in \sr{S}_{1}$,
\begin{eqnarray*}
\mathbb{P}(\tilde{\vc{Z}}_{\ell + 1} = (n_{1},0) | \tilde{\vc{Z}}_{\ell} = (n_{1}+i,1)) 
&=& \frac{\pi(n_{1},0)}{\pi(n_{1}+i,1)} p_{i1}^{(1)}\\
&=& \frac{\pi(n_{1},0)}{\pi(n_{1}+i,1)} \frac{\pi(n_{1} + i  +j,0)}{\pi(n_{1} + i + j,0)}p_{i1}^{(1)}\\
&=& \frac{\pi(n_{1},0)}{\pi(n_{1} + i + j,0)} \frac{p_{j(-1)}^{(+)}}{\tilde{p}_{(-j)1}^{(1)}}p_{i1}^{(1)},
\end{eqnarray*}
where the third equality is given by \eqn{tilde_transition_+}.
Since the left hand side of this equation also does not depend on $n_{1}$, we directly obtain, for $\eta_{1} \in (0,1)$ satisfying \eqn{geo_decay},
\begin{eqnarray}
\label{eqn:sta_geo_boundary_111}
\frac{\pi(n_{1},0)}{\pi(n_{1}+1,0)} = \eta_{1}^{-1}, \quad n_{1} \ge 1,
\end{eqnarray}
if $|i+j| = 1$ holds, and we have \eqn{stationary_dist2_1}.
On the other hand, from irreducible condition (i), if $|i+j| \neq 1$, then at least one of $p_{10}^{(+)} > 0$ and $p_{(-1)0}^{(+)} > 0$ holds, and  we have, for $k=1$ or $k=-1$, 
\begin{eqnarray*}
\mathbb{P}(\tilde{\vc{Z}}_{\ell + 1} = (n_{1},1) | \tilde{\vc{Z}}_{\ell} = (n_{1}+k,1)) 
&=& \frac{\pi(n_{1},1)}{\pi(n_{1} + k,1)} p_{k0}^{(+)}.
\end{eqnarray*}
From \lem{sta_geo}, the probability of the left hand side of this equation is independent for $n_{1}$, and denote its probability by $\tilde{p}_{(-k)0}^{(+)}$
Thus, we obtain, from \eqn{tilde_transition_+}
\begin{eqnarray*}
&&\mathbb{P}(\tilde{\vc{Z}}_{\ell + 1} = (n_{1},0) | \tilde{\vc{Z}}_{\ell} = (n_{1}+i,1)) \\
&& \qquad = \frac{\pi(n_{1},0)}{\pi(n_{1}+i,1)} p_{i1}^{(1)}\\
&& \qquad = \frac{\pi(n_{1},0)}{\pi(n_{1}+i,1)} \frac{\pi(n_{1} + i + k,1)}{\pi(n_{1} + i + k,1)} \frac{\pi(n_{1} + i + j + k,0)}{\pi(n_{1} + i + j + k,0)}p_{i1}^{(1)} \\
&& \qquad = \frac{\pi(n_{1},0)}{\pi(n_{1} + i + j + k,0)} \frac{p_{k0}^{(+)}}{\tilde{p}_{(-k)0}^{(+)}} \frac{p_{j(-1)}^{(+)}}{\tilde{p}_{(-j)1}^{(1)}}p_{i1}^{(1)}.
\end{eqnarray*}
From the irreducible condition (i) again, if $|i+j| \neq 1$, we must have $|i+j+k| = 1$, and therefore, we also have \eqn{sta_geo_boundary_111}. We complete the proof since \eqn{sta_geo_boundary_111} implies \eqn{stationary_dist2_1}.
\end{proof}

\begin{lemma}
{\rm
\label{lem:con_c1}
Under the conditions in \thr{reversibility condition}, if $\{\tilde{\vc{Z}}_{\ell}\}$ is a reflecting random walk, then (a1), (a2) and (a3) hold, and the stationary distribution satisfies \eqn{stationary_dist1}--\eqn{stationary_dist11_2}.
}
\end{lemma}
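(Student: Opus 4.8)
The plan is to combine two facts already in hand. First, by \lem{sta_geo} and \lem{sta_geo_bound} the stationary distribution has the geometric form \eqn{stationary_dist2_1}--\eqn{stationary_dist3} for some $\eta_{1},\eta_{2}\in(0,1)$; and since $\{\vc{Z}_{\ell}\}$ is irreducible and admits a stationary distribution, $\pi$ is strictly positive on all of $\sr{S}$. Second, because $\{\tilde{\vc{Z}}_{\ell}\}$ is a reflecting random walk, for a fixed source face $\sr{S}_{i}$ ($i=0,1,2,+$) and a fixed displacement $\vc{d}\in\{-1,0,1\}^{2}$ the probability $\mathbb{P}(\tilde{\vc{Z}}_{\ell+1}=\vc{n}+\vc{d}\mid\tilde{\vc{Z}}_{\ell}=\vc{n})$ is the \emph{same} for every $\vc{n}\in\sr{S}_{i}$, regardless of which face contains $\vc{n}+\vc{d}$. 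On the other hand, \eqn{transition_of_reversed} equates that probability with $\frac{\pi(\vc{n}+\vc{d})}{\pi(\vc{n})}\,\mathbb{P}(\vc{Z}_{\ell+1}=\vc{n}\mid\vc{Z}_{\ell}=\vc{n}+\vc{d})$, whose forward factor is a modeling parameter of whichever face contains $\vc{n}+\vc{d}$. So I will slide $\vc{n}$ within a fixed source face until $\vc{n}+\vc{d}$ lands successively in two different faces, read the $\pi$-ratios off \eqn{stationary_dist2_1}--\eqn{stationary_dist3}, and equate the two evaluations; this forces precisely the proportionalities in (a1) and (a2).

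For (a1), fix $i\in\{-1,0,1\}$ and take source face $\sr{S}_{+}$ with displacement $\vc{d}=(i,-1)$. Choosing $\vc{n}=(n_{1},n_{2})$ with $n_{1},n_{2}\ge2$, the point $\vc{n}+\vc{d}$ lies in $\sr{S}_{+}$ and the reversed probability evaluates to $\eta_{1}^{i}\eta_{2}^{-1}p^{(+)}_{(-i)1}$; choosing $\vc{n}=(n_{1},1)$ with $n_{1}\ge2$, the point $\vc{n}+\vc{d}$ lies in $\sr{S}_{1}$ and it evaluates to $\eta_{1}^{i}\,\frac{\pi(1,0)}{\pi(1,1)}\,p^{(1)}_{(-i)1}$. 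Equating and cancelling $\eta_{1}^{i}$ gives $p^{(1)}_{k1}=c^{(1+)}p^{(+)}_{k1}$ for all $k\in\{-1,0,1\}$ with $c^{(1+)}=\pi(1,1)/(\eta_{2}\pi(1,0))>0$. The coordinate-swapped computation (source face $\sr{S}_{+}$, displacement $(-1,i)$, comparing the image in $\sr{S}_{+}$ against the image in $\sr{S}_{2}$) gives $p^{(2)}_{1k}=c^{(2+)}p^{(+)}_{1k}$ with $c^{(2+)}=\pi(1,1)/(\eta_{1}\pi(0,1))>0$. This is (a1).

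For (a2), first take source face $\sr{S}_{1}$ with displacement $\vc{d}=(-1,0)$: at $\vc{n}=(1,0)$ the image is $(0,0)\in\sr{S}_{0}$, giving $\frac{\pi(0,0)}{\pi(1,0)}p^{(0)}_{10}$, while at $\vc{n}=(n_{1},0)$ with $n_{1}\ge2$ the image stays in $\sr{S}_{1}$, giving $\eta_{1}^{-1}p^{(1)}_{10}$; equating yields $p^{(0)}_{10}=c^{(10)}p^{(1)}_{10}$ with $c^{(10)}=\pi(1,0)/(\eta_{1}\pi(0,0))>0$. Next take source face $\sr{S}_{+}$ with displacement $\vc{d}=(-1,-1)$: at $\vc{n}=(1,1)$ the image is $(0,0)\in\sr{S}_{0}$, giving $\frac{\pi(0,0)}{\pi(1,1)}p^{(0)}_{11}$, while at $\vc{n}=(n_{1}+1,1)$ with $n_{1}\ge1$ the image lies in $\sr{S}_{1}$, giving $\eta_{1}^{-1}\frac{\pi(1,0)}{\pi(1,1)}p^{(1)}_{11}$; equating yields $p^{(0)}_{11}=c^{(10)}p^{(1)}_{11}$ with the \emph{same} $c^{(10)}$. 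The mirror computations (source $\sr{S}_{2}$ with $\vc{d}=(0,-1)$, and source $\sr{S}_{+}$ with $\vc{d}=(-1,-1)$ comparing the $\sr{S}_{0}$ image against the $\sr{S}_{2}$ image) give $p^{(0)}_{j1}=c^{(20)}p^{(2)}_{j1}$ for $j=0,1$ with $c^{(20)}=\pi(0,1)/(\eta_{2}\pi(0,0))>0$. Since $c^{(10)},c^{(20)}>0$, the $c^{(10)}=0$ / $c^{(20)}=0$ alternative in (a2) is vacuous here, and (a2) holds.

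It remains to read off (a3) and \eqn{stationary_dist1}--\eqn{stationary_dist11_2} from the four formulas just obtained. A direct substitution gives $c^{(10)}c^{(1+)}=\pi(1,1)/(\eta_{1}\eta_{2}\pi(0,0))=c^{(20)}c^{(2+)}$, which is (a3). Rewriting the definitions of $c^{(10)}$ and $c^{(20)}$ gives \eqn{stationary_dist1} and \eqn{stationary_dist1_2}; substituting $c^{(10)}c^{(1+)}=\pi(1,1)/(\eta_{1}\eta_{2}\pi(0,0))$ and its $c^{(20)}c^{(2+)}$ counterpart gives \eqn{stationary_dist11} and \eqn{stationary_dist11_2}; and since $c^{(10)}c^{(1+)}/c^{(2+)}=\pi(0,1)/(\eta_{2}\pi(0,0))$ and $c^{(20)}c^{(2+)}/c^{(1+)}=\pi(1,0)/(\eta_{1}\pi(0,0))$, we get \eqn{stationary_equation_c20=0} and \eqn{stationary_equation_c10=0}. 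Together with \lem{sta_geo} and \lem{sta_geo_bound} this yields all of (a1)--(a4). The whole argument is bookkeeping; the only points needing care are placing each base point far enough into its source face that all three displaced neighbours fall where intended, and remembering that the reversal in \eqn{transition_of_reversed} turns a forward step with displacement $\vc{d}$ into a backward step with displacement $-\vc{d}$. There is no deeper obstacle, and the strict positivity of $\pi$ coming from irreducibility is exactly what makes all four constants positive, so the vanishing alternatives in (a2) never occur.
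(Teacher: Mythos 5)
Your argument is correct and is essentially the paper's own proof (Appendix B): both exploit the fact that a reflecting random walk has face-homogeneous transition probabilities, evaluate the same reversed transition probability at two base points of a fixed source face so that the forward step originates from two different faces, and equate the two expressions, with the constants $c^{(1+)}=\pi(1,1)/(\eta_{2}\pi(1,0))$, $c^{(2+)}=\pi(1,1)/(\eta_{1}\pi(0,1))$, $c^{(10)}=\pi(1,0)/(\eta_{1}\pi(0,0))$, $c^{(20)}=\pi(0,1)/(\eta_{2}\pi(0,0))$ coming out identically. The only (harmless) deviation is that you always take $c^{(10)},c^{(20)}$ to be these strictly positive ratios, whereas the paper sets them to $0$ in the degenerate case $p^{(1)}_{1j}=0$ for $j=0,1$ (resp.\ $p^{(2)}_{j1}=0$); since the proportionality then reads $0=0$, your choice satisfies (a2) and the equations in (a4) just as well.
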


The proof of \lem{con_c1} is deferred to \app{proof_c1}. We are now ready to prove \thr{reversibility condition}.  

\begin{proof*}{Proof of \thr{reversibility condition}}
From Lemmas \lemt{sta_geo}, \lemt{sta_geo_bound} and \lemt{con_c1}, we already prove necessity of \thr{reversibility condition}. For sufficiency, suppose the conditions (a1)--(a4) hold. Then, from \eqn{reflecting_random_walk} and \eqn{transition_of_reversed}, we have, for $i=0,1,2,+$, $j,k=0,\pm 1$, $\vc{n} \in \sr{S}_{i}$ and $\vc{n}' = \vc{n} + (j,k) \in \sr{S}_{i}$, 
\begin{eqnarray}
\label{eqn:homogeneous_downward}
\mathbb{P}(\tilde{\vc{Z}}_{\ell + 1} = \vc{n}' | \tilde{\vc{Z}}_{\ell} = \vc{n}) &=& \frac{\pi(\vc{n}')}{\pi(\vc{n})} \mathbb{P}(\vc{X}^{(i)} = \vc{n} - \vc{n}') \nonumber\\
&=& \frac{\pi((\vc{n} + (j,k))}{\pi(\vc{n})} \mathbb{P}(\vc{X}^{(i)} = (-j,-k)) \nonumber\\
&=& \eta_{1}^{j} \eta_{2}^{k} p_{(-j)(-k)}^{(i)}.
\end{eqnarray}
Thus,  the transition probabilities of $\{\tilde{\vc{Z}}_{\ell}\}$ from $\sr{S}_{i}$ to $\sr{S}_{i}$ are homogeneous.

We next verify that the downward transitions from ${\sr S}_{+}$ to ${\sr S}_{1}$ and from ${\sr S}_{+}$ to ${\sr S}_{+}$ are homogeneous. 
To this end, let consider the transitions from $\sr{S}_{+}$ to $\sr{S}_{1}$.
For $\vc{n} = (n_{1},1) \in \sr{S}_{+}$ and $\vc{n}' = (n_{1} + i,0) \in \sr{S}_{1}$ and $i=0,\pm1$, we have, by the conditions (a1), (a2), (a3) and (a4)
\begin{eqnarray*}
\mathbb{P}(\tilde{\vc{Z}}_{\ell + 1} = (n_{1} + i,0) | \tilde{\vc{Z}}_{\ell} = (n_{1},1)) 
&=& \frac{\pi(n_{1} + i,0)}{\pi(n_{1},1)} \mathbb{P}(\vc{X}^{(1)} = (-i,1))\\
&=& \frac{\eta_{1}^{i} \pi(1,0)}{\pi(1,1)} p_{(-i)1}^{(1)} \\
&=& \frac{1}{c^{(1+)}}\eta_{1}^{i} \eta_{2}^{-1} p_{(-i)1}^{(1)}\\
&=& \eta_{1}^{i} \eta_{2}^{-1} p_{(-i)1}^{(+)},\\
&=& \mathbb{P}(\tilde{\vc{Z}}_{\ell + 1} = (n_{1} + i,m-1) | \tilde{\vc{Z}}_{\ell} = (n_{1},m)), \quad m \ge 2.
\end{eqnarray*}
where the last equation is obtained by \eqn{homogeneous_downward}.
Note that $(n_{1},m), (n_{1} + i,m-1) \in {\sr S}_{+}$ since we assume $n_{1},n_{1} + i \ge 1$ .
Thus,  the downward transitions for the direction of $n_{1}$-axis in the interior are homogeneous. Using a similar argument, we can prove that another downward transitions are also homogeneous. That is, $\{\tilde{\vc{Z}}_{\ell}\}$ has the homogeneous transitions in each subset $\sr{S}_{i}$. Hence, $\{\tilde{\vc{Z}}_{\ell}\}$ is a reflecting  random walk if the conditions (a1)--(a4) hold. We complete the proof.
\end{proof*}

We discuss about the relationship between structure-reversibility and product form stationary distribution.

\begin{corollary}
\label{cor:reversibility_product form}
{\rm Suppose that the reflecting random walk $\{\vc{Z}_{\ell}\}$ is structure-reversible. Then the stationary distribution of $\{\vc{Z}_{\ell}\}$ has a product form solution if and only if the following conditions are satisfied.
\begin{eqnarray}
\label{eqn:Condition_product1}
&&c^{(1+)} = c^{(20)} \mbox{ if } c^{(20)} > 0, \\
\label{eqn:Condition_product2}
&&c^{(2+)} = c^{(10)} \mbox{ if } c^{(10)} > 0. 
\end{eqnarray}
}
\end{corollary}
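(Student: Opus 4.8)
The plan is to recast the product-form property as a single scalar identity and then test that identity against the explicit stationary distribution furnished by \thr{reversibility condition}. First observe that, since $\{\vc{Z}_{\ell}\}$ is irreducible and admits the stationary distribution $\pi$, we have $\pi(\vc{n}) > 0$ for every $\vc{n} \in \sr{S}$, and in particular $\pi(0,0) > 0$. Recall that $\pi$ equals the product of its two marginals precisely when it factorizes as $\pi(n_{1},n_{2}) = g(n_{1}) h(n_{2})$ for some nonnegative $g,h$; because $\pi(0,0)>0$, this is in turn equivalent to the rank-one identity
\[
\pi(n_{1},n_{2})\,\pi(0,0) = \pi(n_{1},0)\,\pi(0,n_{2}), \qquad n_{1},n_{2}\ge 0 .
\]
Indeed, if $\pi=gh$ then both sides equal $g(n_{1})g(0)h(n_{2})h(0)$; conversely the identity displays the factorization $\pi(n_{1},n_{2}) = \pi(n_{1},0)\pi(0,n_{2})/\pi(0,0)$, and since $\sum_{\vc{n}\in\sr{S}}\pi(\vc{n})=1$ the two factors are, after the evident renormalization, exactly the marginal distributions.

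Next I would feed in the geometric expressions of condition (a4). The displayed identity is automatic when $n_{1}=0$ or $n_{2}=0$; for $n_{1},n_{2}\ge 1$, inserting \eqn{stationary_dist2_1}, \eqn{stationary_dist2_2} and \eqn{stationary_dist3} makes the common factor $\eta_{1}^{n_{1}-1}\eta_{2}^{n_{2}-1}$ cancel on both sides, so the whole family of identities collapses to the single equation $\pi(1,1)\,\pi(0,0) = \pi(1,0)\,\pi(0,1)$. Hence a structure-reversible $\{\vc{Z}_{\ell}\}$ has product-form stationary distribution if and only if this one equation holds.

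Finally I would evaluate that equation using the remaining relations of (a4). By \rem{irreducibility} at least one of $c^{(10)}>0$, $c^{(20)}>0$ holds. If $c^{(10)}>0$, substitute \eqn{stationary_dist1}, \eqn{stationary_equation_c20=0} and \eqn{stationary_dist11}; cancelling the positive factor $c^{(10)}c^{(1+)}\eta_{1}\eta_{2}\pi(0,0)^{2}$ turns the equation into $c^{(10)}=c^{(2+)}$, which is \eqn{Condition_product2}. The case $c^{(20)}>0$ is symmetric: substituting \eqn{stationary_equation_c10=0}, \eqn{stationary_dist1_2} and \eqn{stationary_dist11_2} yields $c^{(20)}=c^{(1+)}$, which is \eqn{Condition_product1}. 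When both $c^{(10)},c^{(20)}>0$, condition (a3) states $c^{(10)}c^{(1+)}=c^{(20)}c^{(2+)}$, which makes \eqn{Condition_product1} and \eqn{Condition_product2} equivalent; so in every case the stated pair of conditions is exactly equivalent to $\pi(1,1)\pi(0,0)=\pi(1,0)\pi(0,1)$, and the corollary follows.

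The algebra in the last step is routine bookkeeping; the only place calling for care is the reduction in the first paragraph, namely checking not just that product form implies the rank-one identity but also the converse — that the identity, together with $\pi(0,0)>0$ and normalization, genuinely rebuilds $\pi$ as the product of its \emph{own} marginals, so that no hidden normalization constraint is lost. That is the step I would write out most carefully.
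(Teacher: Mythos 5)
Your argument is correct, and it reaches the same conclusion by a somewhat different organization than the paper's. The paper (in its appendix) treats necessity and sufficiency separately: for necessity it writes the four factorization equations at $(0,0),(1,0),(0,1),(1,1)$ in terms of putative factors $\nu^{(1)},\nu^{(2)}$ and extracts $c^{(1+)}=c^{(20)}$ (resp.\ $c^{(2+)}=c^{(10)}$) by substitution; for sufficiency it explicitly constructs the factors $\nu_{n}^{(1)}=c^{(2+)}\eta_{1}^{n}\nu_{0}^{(1)}$, $\nu_{n}^{(2)}=c^{(20)}\eta_{2}^{n}\nu_{0}^{(2)}$ and verifies that their products reproduce $\pi$. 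You instead run a single chain of equivalences: product form $\Leftrightarrow$ the rank-one identity $\pi(n_{1},n_{2})\pi(0,0)=\pi(n_{1},0)\pi(0,n_{2})$ $\Leftrightarrow$ (after cancelling the common geometric factor supplied by (a4)) the single corner equation $\pi(1,1)\pi(0,0)=\pi(1,0)\pi(0,1)$ $\Leftrightarrow$ \eqn{Condition_product1}--\eqn{Condition_product2}, using \rem{irreducibility} and (a3) to reconcile the two cases. The underlying computation is the same (both boil down to comparing the $(1,1)$ and $(1,0)\times(0,1)$ entries via \eqn{stationary_dist1}--\eqn{stationary_dist11_2}), but your reduction buys a cleaner bookkeeping: both implications fall out of one equivalence, and the explicit construction of the marginal factors is replaced by the standard observation that a factorized joint probability distribution is, after normalization, the product of its own marginals. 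The point you flag for careful writing --- that the rank-one identity together with $\pi(0,0)>0$ and normalization genuinely rebuilds $\pi$ as the product of its marginals --- is exactly the right one to spell out, and it is the step the paper handles implicitly by constructing $\nu^{(1)},\nu^{(2)}$ directly; with that written out your proof is complete.
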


The proof of this corollary is deferred to \app{product_form}.

\section{Geometric conditions for structure-reversibility}
\label{sec:Geometric_conditions}
\setnewcounter
We characterize the structure-reversibility in \thr{reversibility condition}.
However, the condition (a4) may not be easily checked because it uses the stationary distribution. Thus,
we replace it by conditions only using modeling primitives.
To this end, we introduce the following notations.
\begin{eqnarray*}
&&\gamma_{0}(\vc{z}) = p_{00}^{(0)} + \frac{c^{(0)}}{c^{(1+)}} p_{(-1)0}^{(1)} z_{1}^{-1} + \frac{c^{(0)}}{{c}^{(2+)}} p_{0(-1)}^{(2)}z_{2}^{-1} + {c}^{(0)} p_{(-1)(-1)}^{(+)} z_{1}^{-1} z_{2}^{-1},\\
&&\gamma_{1}(\vc{z}) = p_{00}^{(1)} + p_{10}^{(1)} z_{1} + p_{(-1)0}^{(1)} z_{1}^{-1}+ {c}^{(1+)} \left(p_{1(-1)}^{(+)} z_{1}z_{2}^{-1} + p_{0(-1)}^{(+)}z_{2}^{-1}  + p_{(-1)(-1)}^{(+)} z_{1}^{-1} z_{2}^{-1} \right),\\
&&\gamma_{2}(\vc{z}) = p_{00}^{(2)} + p_{01}^{(2)} z_{2} + p_{0(-1)}^{(2)} z_{2}^{-1} +{c}^{(2+)}\left( p_{(-1)1}^{(+)} z_{1}^{-1} z_{2} + p_{(-1)0}^{(+)} z_{1}^{-1} + p_{(-1)(-1)}^{(+)} z_{1}^{-1}z_{2}^{-1} \right),\\
&&\gamma_{+}(\vc{z}) = \sum_{i=-1}^{1} \sum_{j=-1}^{1} p_{ij}^{(+)} z_{1}^{i} z_{2}^{j}, 
\end{eqnarray*}
for $\vc{z} \in \dd{R}^{2}$ such that $\vc{z} > (0,0)$, where $c^{(k+)}$ is nonnegative constants defined in \thr{reversibility condition} for $k=1,2$, and 
\begin{eqnarray*}
&&c^{(0)} \equiv \max(c^{(10)}c^{(1+)}, c^{(20)}c^{(2+)}).
\end{eqnarray*}
Note that $\gamma_{+}$ is the generating function of $\vc{X}^{(+)}$. 
Furthermore, if the condition (a3) and $c^{(10)} > 0$ hold, then we have
\begin{eqnarray}
\label{eqn:remark_c01}
c^{(0)} = c^{(10)}c^{(1+)}, \quad \frac{c^{(0)}}{c^{(1+)}} = \frac{c^{(10)}c^{(1+)}}{c^{(1+)}}= c^{(10)}, 
\end{eqnarray}
and if $c^{(20)} > 0$, 
\begin{eqnarray}
\label{eqn:remark_c02}
c^{(0)} = c^{(20)}c^{(2+)}, \quad \frac{c^{(0)}}{c^{(2+)}} = \frac{c^{(20)}c^{(2+)}}{c^{(2+)}} = c^{(20)}.
\end{eqnarray}
In addition, if $c^{(10)} > 0$ and $c^{(20)} > 0$, then we have $c^{(0)} = c^{(10)}c^{(1+)} = c^{(20)}c^{(2+)}$.
We prepare the next lemma, which will be proved at the end of this section.
\begin{lemma}
\label{lem:modeling_primitive}
{\rm
Suppose that conditions (i), (ii) and (a1)--(a3) hold. Then, the condition (iii) and (a4) are equivalent to 

\begin{mylist}{0}
\item[(a5)] There exist $\eta_{1},\eta_{2} \in (0,1)$ satisfying $\gamma_{i}(\eta_{1}^{-1},\eta_{2}^{-1}) = 1$ for all $i=0,1,2,+$.
\end{mylist}
}
\end{lemma}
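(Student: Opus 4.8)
The plan is to reduce the whole lemma to one bookkeeping computation. For a measure $\pi$ defined on $\sr{S}$ by the formulas of (a4) with $\pi(0,0)$ left free, recall from the remark following \eqn{transition_of_reversed} that $\pi$ is the stationary distribution of $\{\vc{Z}_{\ell}\}$ precisely when the (formally defined) reversed transition probabilities $\frac{\pi(\vc{n}')}{\pi(\vc{n})}\mathbb{P}(\vc{Z}_{\ell+1}=\vc{n}\mid\vc{Z}_{\ell}=\vc{n}')$ sum to $1$ over $\vc{n}'\in\sr{S}$ at each $\vc{n}\in\sr{S}$. The key claim I would establish is: under (i), (ii) and (a1)--(a3), for every $\vc{n}\in\sr{S}_{i}$ ($i=0,1,2,+$) that sum equals exactly $\gamma_{i}(\eta_{1}^{-1},\eta_{2}^{-1})$. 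Granting this, the lemma is immediate. If (a5) holds, take $\pi$ as in (a4); since $\eta_{1},\eta_{2}\in(0,1)$ it is summable, and after normalization the reversed rates sum to $1$ everywhere by the key claim, so $\pi$ is stationary, giving (iii), and it has the form (a4) by construction. Conversely, if (iii) and (a4) hold, the stationary $\pi$ has the form (a4) for some $\eta_{1},\eta_{2}\in(0,1)$, its reversed rates sum to $1$, hence $\gamma_{i}(\eta_{1}^{-1},\eta_{2}^{-1})=1$ for each $i$, which is (a5). (Throughout, \rem{irreducibility} ensures at least one of $c^{(10)}>0$, $c^{(20)}>0$, so the ratios $\pi(\vc{m})/\pi(\vc{n})$ below are unambiguously fixed by (a4), and (a3) reconciles the two parametrizations of $\pi$ when both constants are positive.)

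To prove the key claim I would proceed region by region. For $\vc{n}$ in the bulk of a region --- namely $(n_{1},n_{2})$ with $n_{1},n_{2}\ge 2$; $(n_{1},0)$ or $(0,n_{2})$ with the nonzero coordinate $\ge 2$; and $(0,0)$ --- every in-neighbour of $\vc{n}$ lies in the same region, together with, for a boundary face, the adjacent layer of $\sr{S}_{+}$. Substituting (a4), using the ratios $\pi(1,1)/\pi(1,0)=c^{(1+)}\eta_{2}$ and $\pi(1,1)/\pi(0,1)=c^{(2+)}\eta_{1}$, and, at the corner, the identities \eqn{remark_c01} or \eqn{remark_c02}, each term of the sum collapses, and the total is exactly the Laurent polynomial $\gamma_{i}(\eta_{1}^{-1},\eta_{2}^{-1})$ with nothing left over; this recovers the computation already carried out in the proof of \thr{reversibility condition}. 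For $\vc{n}$ a transitional state --- $(1,0)$, $(0,1)$, $(1,1)$, and $(m,1)$, $(1,m)$ with $m\ge 2$ --- some in-neighbours cross a region boundary and thus carry the ``wrong'' increment law (e.g.\ $(1,1)$ has nine in-neighbours, of which $(0,0)\in\sr{S}_{0}$, $(0,1),(0,2)\in\sr{S}_{2}$, $(1,0),(2,0)\in\sr{S}_{1}$). For each such in-neighbour I would subtract its actual contribution from the contribution it would make in the bulk formula; after clearing the common power of $\eta_{1},\eta_{2}$, the difference is a multiple of one of $p^{(0)}_{1j}-c^{(10)}p^{(1)}_{1j}$, $p^{(0)}_{j1}-c^{(20)}p^{(2)}_{j1}$, $p^{(2)}_{1i}-c^{(2+)}p^{(+)}_{1i}$, $p^{(1)}_{i1}-c^{(1+)}p^{(+)}_{i1}$ (or the composite $p^{(0)}_{11}-c^{(10)}c^{(1+)}p^{(+)}_{11}$), every one of which vanishes by (a1)--(a2); hence the transitional sums also equal $\gamma_{i}(\eta_{1}^{-1},\eta_{2}^{-1})$.

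The step I expect to be the main obstacle is exactly this transitional verification, most of all at $(1,1)$, where all nine in-neighbours must be matched against the bulk template and one must track which of (a1), (a2), (a3) effects each cancellation, all while carrying the case split $c^{(10)}>0$ versus $c^{(20)}>0$ (with (a3) guaranteeing agreement when both hold). The content is elementary --- linear identities in $\eta_{1},\eta_{2}$ --- but it is detail-heavy; exploiting the $1\leftrightarrow 2$ symmetry of the two faces and reusing the bulk expressions for $\gamma_{i}(\eta_{1}^{-1},\eta_{2}^{-1})$ cuts the number of genuinely distinct computations down to a handful. With the key claim in hand, the asserted equivalence follows as sketched in the first paragraph.
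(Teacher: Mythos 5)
Your proposal is correct and follows essentially the same route as the paper: the necessity of (a5) is the statement that the balance equation at each state, divided by $\pi(\vc{n})$, reads $\gamma_{i}(\eta_{1}^{-1},\eta_{2}^{-1})=1$, and the sufficiency is the verification (carried out in the paper's \app{check_stationary1}) that for $\pi$ defined by (a4) the reversed-rate sum at every state collapses to $\gamma_{i}(\eta_{1}^{-1},\eta_{2}^{-1})$, with (a1)--(a3) supplying exactly the cancellations you identify at the transitional states $(1,0)$, $(0,1)$, $(1,1)$, $(n_{1},1)$, $(1,n_{2})$. The only cosmetic difference is that you package both directions through a single ``key claim,'' whereas the paper states the necessity via the stationary equations \eqn{stationary_equation_0}--\eqn{stationary_equation_+} at bulk states and the sufficiency via \eqn{check_stationary1}; these are the same computations.
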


The next theorem is immediate from this lemma and \thr{reversibility condition}, which gives the conditions of structure-reversibility in terms of modeling parameters. Thus, the theorem is a main result of this paper. 

\begin{theorem}
\label{thr:con_weak_parmeter}
{\rm
Suppose the conditions (i) and (ii). Then, $\{\vc{Z}_{\ell}\}$ is structure-reversible if and only if the conditions (a1)--(a3) and (a5) hold.
}
\end{theorem}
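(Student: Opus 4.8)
The plan is to derive Theorem~\thrt{con_weak_parmeter} as a direct corollary of \thr{reversibility condition} together with \lem{modeling_primitive}. Recall that \thr{reversibility condition} asserts that, assuming (i), (ii) and (iii), the walk $\{\vc{Z}_{\ell}\}$ is structure-reversible if and only if (a1)--(a4) hold; and \lem{modeling_primitive} asserts that, assuming (i), (ii) and (a1)--(a3), the pair of conditions (iii) and (a4) is equivalent to (a5). The task is therefore purely logical bookkeeping: we must fold the assumption (iii), which is part of the hypotheses of \thr{reversibility condition}, into the equivalence, since Theorem~\thrt{con_weak_parmeter} only assumes (i) and (ii).

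First I would prove the ``only if'' direction. Suppose $\{\vc{Z}_{\ell}\}$ is structure-reversible. By definition (\dfn{quasi-reversibility}) this already entails that $\{\vc{Z}_{\ell}\}$ has a stationary distribution, i.e.\ condition (iii) holds. Hence (i), (ii) and (iii) are all in force, so \thr{reversibility condition} applies and gives (a1)--(a4). In particular (a1)--(a3) hold. Now invoke \lem{modeling_primitive}: its hypotheses (i), (ii) and (a1)--(a3) are satisfied, so (iii) and (a4) together are equivalent to (a5); since we have both (iii) and (a4), we conclude (a5) holds. Thus (a1)--(a3) and (a5) all hold, as required.

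Next the ``if'' direction. Assume (i), (ii), and that (a1)--(a3) and (a5) hold. Then the hypotheses of \lem{modeling_primitive} --- namely (i), (ii) and (a1)--(a3) --- are met, so the lemma tells us that (a5) is equivalent to the conjunction of (iii) and (a4). Since (a5) holds, we obtain both (iii) and (a4). Now (i), (ii) and (iii) hold, so \thr{reversibility condition} is applicable, and since (a1)--(a4) all hold, that theorem yields that $\{\vc{Z}_{\ell}\}$ is structure-reversible. This completes the equivalence.

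There is essentially no obstacle here beyond making sure the quantifiers on the constants $c^{(1+)}, c^{(2+)}, c^{(10)}, c^{(20)}$ are handled consistently between the two cited results --- they are introduced existentially in (a1)--(a2) and then reused in the definition of $\gamma_i$ and in (a3), so one should note that, once (a1)--(a2) fix these constants, \lem{modeling_primitive} is stated for those same constants, and the argument goes through verbatim. The only subtlety worth a sentence in the writeup is the appeal to \rem{irreducibility}: under (a2) at least one of $c^{(10)}>0$, $c^{(20)}>0$ must hold (else (i) fails), which is what makes the $\gamma_0$ term and the displayed relations \eqn{remark_c01}--\eqn{remark_c02} meaningful inside the proof of \lem{modeling_primitive}; but since that lemma is being invoked as a black box, nothing further is needed at this level. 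The proof is then one short paragraph:

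\begin{proof*}{Proof of \thr{con_weak_parmeter}}
Assume (i) and (ii).

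Suppose first that $\{\vc{Z}_{\ell}\}$ is structure-reversible. By \dfn{quasi-reversibility}, the stationary distribution $\pi$ exists, so (iii) holds. Then (i), (ii) and (iii) are satisfied, and \thr{reversibility condition} gives (a1)--(a4); in particular (a1)--(a3) hold. Applying \lem{modeling_primitive} under (i), (ii) and (a1)--(a3), the conjunction of (iii) and (a4) is equivalent to (a5); since both (iii) and (a4) hold, (a5) follows. Hence (a1)--(a3) and (a5) hold.

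Conversely, suppose (a1)--(a3) and (a5) hold. By \lem{modeling_primitive}, under (i), (ii) and (a1)--(a3), condition (a5) implies that both (iii) and (a4) hold. Thus (i), (ii) and (iii) are satisfied, and \thr{reversibility condition} applies; since (a1)--(a4) all hold, that theorem yields that $\{\vc{Z}_{\ell}\}$ is structure-reversible. This completes the proof.
\end{proof*}
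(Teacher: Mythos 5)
Your proposal is correct and matches the paper exactly: the paper states that Theorem \thrt{con_weak_parmeter} is immediate from \lem{modeling_primitive} combined with \thr{reversibility condition}, and your two-directional bookkeeping (using that structure-reversibility entails (iii) by \dfn{quasi-reversibility} in one direction, and that (a5) yields (iii) and (a4) via the lemma in the other) is precisely that argument spelled out. Your remark about the constants $c^{(1+)}, c^{(2+)}, c^{(10)}, c^{(20)}$ being fixed by (a1)--(a2) before being reused in $\gamma_{i}$ is a reasonable, correct clarification that the paper leaves implicit.
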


\begin{remark}
{\rm 
(a5) implies the condition (iii). So, it is more convenient than (a4), which needs (iii).
}
\end{remark}

\begin{remark}
\label{rem:LatoMiya2013}
{\rm
Latouche and Miyazawa \cite{LatoMiya2013} derive necessary and sufficient conditions for the stationary distribution of the two dimensional reflecting random walk to have a product form solution (see Theorem 3.9 of \cite{LatoMiya2013}). Similarly to the condition (a5), their conditions have geometric interruptions. We can see that the equations,
$$\gamma_{+}(\eta_{1}^{-1},\eta_{2}^{-1}) = 1, \qquad \gamma_{1}(\eta_{1}^{-1},\eta_{2}^{-1}) = 1, \qquad \gamma_{2}(\eta_{1}^{-1},\eta_{2}^{-1}) = 1$$
 are equivalent to (3.13), (3.14) and (3.17) of \cite{LatoMiya2013}. But it is also notable that the reflecting random walk having a product form stationary distribution may not be structure reversible. Such an example is given in \app{Example of product}. Moreover, the stationary distribution may not have a product form under the structure-reversibility (see \cor{reversibility_product form}). Thus, those two classes of the stationary distributions are slightly different. However, they are identical under some extra conditions. For example, if the conditions:
\begin{eqnarray}
\label{eqn:pro_same1}
&&p_{11}^{(+)} = p_{11}^{(0)} = p_{11}^{(1)} =  p_{11}^{(2)},\\
\label{eqn:pro_same2}
 &&p_{10}^{(0)} = p_{10}^{(1)}, \quad p_{01}^{(0)} = p_{01}^{(2)}, \quad 
p_{1j}^{(2)} = p_{1j}^{(+)}, \quad p_{j1}^{(1)} = p_{j1}^{(+)}, \quad j=0,-1.
\end{eqnarray}
are satisfied, then (a1), (a2) and (a3) hold. Thus, if (a5) holds, then this reflecting random walk is structure reversible. Moreover, 
we have $c^{(10)} = c^{(20)} = 1$, and therefore, $\gamma_{0}(\eta_{1}^{-1},\eta_{2}^{-1}) = 1$ is equivalent to (3.36) of \cite{LatoMiya2013}.
Thus, its stationary distribution has a product form solution. Namely, under the conditions \eqn{pro_same1} and \eqn{pro_same2},  the condition (a5) is just identical with the product form condition (see Theorem 3.12 in \cite{LatoMiya2013}).
}
\end{remark}

We briefly explain the condition (a5).
Obviously, $\gamma_{+}(1,1) = 1$.
Moreover, the subset $\Gamma_{i} = \{\vc{z} \in \mathbb{R}^{2}; \gamma_{i}(\vc{z}) = 1\}$ is a nonnegative-directed convex (see, e.g., \cite{KobaMiya2012}). This means that $\Gamma_{i}$ describe a locally convex curve  on the nonnegative quadrant for $i=0,\pm1, +$. 
In \fig{(a1) and (a2)}, we illustrate the curve $\Gamma_{i}$ that the condition (a5) holds.
\begin{figure}[htbp]
	\centering
	\includegraphics[height=0.25\textheight]{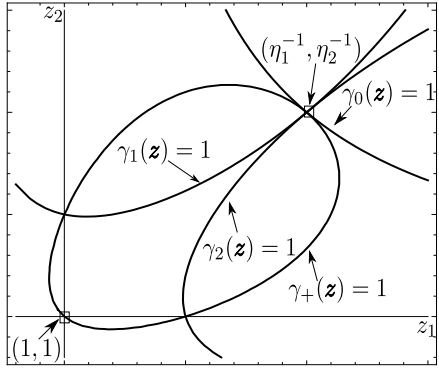}
	\caption{Example curves of $\Gamma_{i}$}
	\label{fig:(a1) and (a2)}
\end{figure}

We are now ready to prove \lem{modeling_primitive}. 

\begin{proof*}{Proof of \lem{modeling_primitive}}
We first prove the necessity of (a5). 
From the condition (iii), we have the following stationary equations. 
\begin{align}
\label{eqn:stationary_equation_0}
&\pi(0,0) = p_{00}^{(0)}\pi(0,0) + p_{(-1)0}^{(1)}\pi(1,0) + p_{0(-1)}^{(2)} \pi(0,1)+ p_{(-1)(-1)}^{(+)} \pi(1,1),\\
\label{eqn:stationary_equation_1}
&\pi(n,0) = \sum_{i=-1}^{1} \left(p_{i0}^{(1)} \pi(n-i,0) + p_{i(-1)}^{(+)} \pi(n-i,1) \right), &&n \ge 2,\\
\label{eqn:stationary_equation_2}
&\pi(0,n) = \sum_{j=-1}^{1} \left(p_{0j}^{(2)} \pi(0,n-j) + p_{(-1)j}^{(+)} \pi(1,n-j) \right), && n \ge 2,\\
\label{eqn:stationary_equation_+}
& \pi(n_{1},n_{2}) = \sum_{i=-1}^{1} \sum_{j=-1}^{1} p_{ij}^{(+)} \pi(n_{1} -i,n_{2} -j), && n_{1},n_{2} \ge 2.
\end{align}
Substituting \eqn{stationary_dist3} into \eqn{stationary_equation_+}, we have $\gamma_{+}(\eta_{1}^{-1},\eta_{2}^{-1}) = 1$. If $c^{(10)} > 0$, from \eqn{stationary_dist2_1}--\eqn{stationary_dist11}, \eqn{stationary_equation_1} and \eqn{stationary_equation_2}, 
\begin{eqnarray*}
&&1 = p_{00}^{(1)} + p_{10}^{(1)} \eta_{1}^{-1} + p_{(-1)0}^{(1)} \eta_{1}+ {c}^{(1+)} \left(p_{1(-1)}^{(+)} \eta_{1}^{-1}\eta_{2}^{} + p_{0(-1)}^{(+)}\eta_{2}  + p_{(-1)(-1)}^{(+)} \eta_{1} \eta_{2}\right),\\
&&\frac{c^{(10)}c^{(1+)}}{c^{(2+)}} = \frac{c^{(10)}c^{(1+)}}{c^{(2+)}}\left(p_{00}^{(2)} + p_{01}^{(2)} \eta_{2}^{-1} + p_{0(-1)}^{(2)} \eta_{2}\right)\\
&& \qquad \qquad \qquad \qquad +c^{(10)}c^{(1+)}\left( p_{(-1)1}^{(+)} \eta_{1} \eta_{2}^{-1} + p_{(-1)0}^{(+)} \eta_{1} + p_{(-1)(-1)}^{(+)} \eta_{1}\eta_{2} \right),\\
\end{eqnarray*}
and we also have $\gamma_{1}(\eta_{1}^{-1},\eta_{2}^{-1}) = 1$ and $\gamma_{2}(\eta_{1}^{-1},\eta_{2}^{-1}) = 1$. If $c^{(20)} > 0$, using \eqn{stationary_equation_c10=0}--\eqn{stationary_dist11_2}, $\gamma_{1}(\eta_{1}^{-1},\eta_{2}^{-1}) = 1$ and $\gamma_{2}(\eta_{1}^{-1},\eta_{2}^{-1}) = 1$ are verified in a similar way. From \rem{irreducibility}, either $c^{(10)} > 0$ or $c^{(20)} > 0$ holds, and therefore, we obtain $\gamma_{1}(\eta_{1}^{-1},\eta_{2}^{-1}) = 1$ and $\gamma_{2}(\eta_{1}^{-1},\eta_{2}^{-1}) = 1$ for all cases.

We finally verify $\gamma_{0}(\eta_{1}^{-1},\eta_{2}^{-1}) = 1$. Using \eqn{stationary_dist2_1}--\eqn{stationary_dist11}, \eqn{remark_c01} and  \eqn{stationary_equation_0}, if $c^{(10)} > 0$, then we have 
\begin{eqnarray*}
1 &=& p_{00}^{(0)} + c^{(10)} p_{(-1)0}^{(1)} \eta_{1}+ \frac{c^{(10)}c^{(1+)}}{c^{(2+)}}p_{0(-1)}^{(2)}\eta_{2} + c^{(10)}c^{(1+)}p_{(-1)(-1)}^{(+)} \eta_{1}\eta_{2} \\
&=& p_{00}^{(0)} + \frac{c^{(0)}}{c^{(1+)}} p_{(-1)0}^{(1)} \eta_{1}+ \frac{c^{(0)}}{c^{(2+)}}p_{0(-1)}^{(2)}\eta_{2} + c^{(0)}p_{(-1)(-1)}^{(+)} \eta_{1}\eta_{2}\\
&=& \gamma_{0}(\eta_{1}^{-1},\eta_{2}^{-1}).
\end{eqnarray*}
The case $c^{(20)} > 0$ is similarly proved, which is completed the proof of the necessity.

We  next verify the sufficiency of (a5). From \rem{irreducibility} again, either $c^{(10)} > 0$ or $c^{(20)}>0$ holds. We only prove \lem{modeling_primitive} for the case $c^{(10)} > 0$ since the case $c^{(20)} > 0$ is similarly proved. Thus, suppose that the conditions (a1)--(a3), (a5) and $c^{(10)} > 0$ hold. Let $\pi$ be a function from ${\sr S}$ to $[0,1]$ satisfying \eqn{stationary_dist2_1}--\eqn{stationary_dist11}. To verify the condition (iii) and (a4), it suffices to prove that this $\pi$ is a finite measure and satisfies the stationary equation, that is, 
\begin{eqnarray}
\label{eqn:check_stationary2}
&&\sum_{\vc{n} \in {\sr S}} \pi(\vc{n}) < \infty,\\
\label{eqn:check_stationary1}
&&\sum_{\vc{n'} \in {\sr S}}\frac{\pi({\vc{n'}})}{\pi(\vc{n})} \mathbb{P}(\vc{Z}_{\ell+1} = \vc{n} | \vc{Z}_{\ell} = \vc{n}') = 1,  \qquad \vc{n} \in {\sr S},
\end{eqnarray}
since $\pi(0,0)$ is arbitrarily given. 
It is easy to verify \eqn{check_stationary2} since $\eta_{1},\eta_{2} \in (0,1)$. To verify \eqn{check_stationary1}, we consider the following six cases.
\begin{eqnarray*}
\vc{n} = (0,0),(1,0),(1,1),(n_{1},0),(n_{1},1),(n_{1},n_{2}), \quad n_{1},n_{2} \ge 2.
\end{eqnarray*}
We omit the other cases that $\vc{n} = (0,1),(0,n_{2}),(1,n_{2})$ because these cases are symmetric to $(1,0),(n_{1},0),(n_{1},1)$, respectively. 
For $\vc{n} = (0,0)$, we have $\vc{n}' \in \{(0,0), (1,0), (0,1), (1,1)\}$, and therefore, 
\begin{eqnarray*}
&&\sum_{\vc{n'} \in {\sr S}}\frac{\pi({\vc{n'}})}{\pi(\vc{n})} \mathbb{P}(\vc{Z}_{\ell+1} = \vc{n} | \vc{Z}_{\ell} = \vc{n}') \\
&& \qquad = p_{00}^{(0)} + \frac{\pi( 1 , 0 )}{\pi( 0 , 0 )} p_{(-1)0}^{(1)} + \frac{\pi( 0 , 1 )}{\pi( 0 , 0 )} p_{0(-1)}^{(2)} + \frac{\pi( 1 , 1 )}{\pi( 0 , 0 )} p_{(-1)(-1)}^{(+)}\\
&& \qquad  = p_{00}^{(0)} + c^{(10)} p_{(-1)0}^{(1)} \eta_{1} + \frac{c^{(10)}c^{(1+)}}{c^{(2+)}} p_{0(-1)}^{(2)} \eta_{2} + c^{(10)}c^{(1+)} p_{(-1)(-1)}^{(+)} \eta_{1} \eta_{2}\\
&& \qquad  = p_{00}^{(0)} + \frac{c^{(1+)}}{c^{(0)}} p_{(-1)0}^{(1)} \eta_{1} + \frac{c^{(0)}}{c^{(2+)}} p_{0(-1)}^{(2)} \eta_{2} + c^{(0)} p_{(-1)(-1)}^{(+)} \eta_{1} \eta_{2}\\
&& \qquad = \gamma_{0} (\eta_{1}^{-1} , \eta_{2}^{-1}) = 1, 
\end{eqnarray*}
where the second and third equalities are obtained by \eqn{stationary_dist1}--\eqn{stationary_dist11} and \eqn{remark_c01}. The other cases are similarly proved, but we give their detailed proofs in \app{check_stationary1} for the reader. This completes the proof.
\end{proof*}

\section{Application to a queueing network}
\setnewcounter
\label{sec:exam}
In this section, we construct a discrete time queueing network whose stationary distribution is not of product form but has closed form, using structure-reversibility.
For this, we modify a discrete time Jackson network, which is introduced below.
We define the reflecting random walk by
\begin{eqnarray*}
&&p_{11}^{(i)} = 0, \quad p_{(-1)(-1)}^{(i)} = 0, \quad  
p_{10}^{(i)} = \lambda_{1}, \quad p_{01}^{(i)} = \lambda_{2}, \quad i=0,1,2,+,\\
&&p_{(-1)0}^{(j)} = \mu_{1} r_{10}, \quad p_{(-1)1}^{(j)} = \mu_{1} r_{12}, \quad j = 1,+,\\
&& p_{0(-1)}^{(k)} = \mu_{2} r_{20}, \quad p_{1(-1)}^{(k)} = \mu_{2} r_{21}, \quad k = 2,+,\\
&& p_{00}^{(+)} = \mu_{1} r_{11} + \mu_{2} r_{22}, \quad p_{11}^{(1)} = \mu_{1} r_{11} + \mu_{2}, \\
&& p_{00}^{(2)} = \mu_{1} + \mu_{2} r_{22}, \quad p_{11}^{(0)} = \mu_{1} + \mu_{2}, 
\end{eqnarray*}
where all constants are positive and  
\begin{eqnarray*}
&&\lambda_{1} + \lambda_{2} + \mu_{1} + \mu_{2} = 1, \\
&&r_{10} + r_{11} + r_{12} = 1,\\
&&r_{20} + r_{21} + r_{22} = 1.
\end{eqnarray*}
This random walk is called  a discrete time Jackson network. 
For $i=1,2$, let $\alpha_{i}$ be a solution of the following traffic equations.
\begin{eqnarray}
\label{eqn:traffic_equation}
\alpha_{1} = \lambda_{1}  + \alpha_{2} r_{21} + \alpha_{1} r_{11}, \quad \alpha_{2} = \lambda_{2}  + \alpha_{1} r_{12} + \alpha_{2} r_{22}.
\end{eqnarray}
The stability condition for this model is given by
\begin{eqnarray*}
\rho_{1} \equiv \frac{\alpha_{1}}{\mu_{1}} < 1, \quad \rho_{2} \equiv \frac{\alpha_{2}}{\mu_{2}} < 1.
\end{eqnarray*}
It is well known that the stationary distribution of the discrete time Jackson network has a product form solution, that is, stationary distribution is given by
\begin{eqnarray*}
\pi(n_{1},n_{2}) = (1 - \rho_{1}) (1- \rho_{2})\rho_{1}^{n_{1}} \rho_{2}^{n_{2}}, \quad n_{1},n_{2} \ge 0.
\end{eqnarray*}
In addition, it is easy to verify the conditions (a1)--(a3) and (a5), and therefore, the discrete time Jackson network is structure-reversible.

For the Jackson network introduced above, let us consider the reflecting random walk whose transition probabilities in the interior of the quadrant are given by
\begin{eqnarray*}
&&p_{11}^{(+)} = 0, \quad p_{(-1)(-1)}^{(+)} = 0, \quad  
p_{10}^{(+)} = \lambda_{1}, \quad p_{01}^{(+)} = \lambda_{2},\\
&&p_{(-1)0}^{(+)} = \mu_{1} r_{10}, \quad p_{(-1)1}^{(+)} = \mu_{1} r_{12}, \quad p_{0(-1)}^{(+)} = \mu_{2} r_{20}, \\
&& p_{1(-1)}^{(+)} = \mu_{2} r_{21}, \quad p_{00}^{(+)} = \mu_{1} r_{11} + \mu_{2} r_{22},
\end{eqnarray*}
and $p^{(0)}_{11} = p^{(1)}_{11} = p^{(2)}_{11} = 0$. Note that the other transition probabilities may be arbitrarily given. We refer to this reflecting random walk as a queueing network flexible at the boundary with no simultaneous movement. For simplicity, we omit ``with no simultaneous movement'' in what follows.

For this queueing network, it is easy to see that the structure reversibility conditions (a1) and (a2) in the  \thr{con_weak_parmeter} are simplified to the following two conditions.
\begin{itemize}
\item[(b1)] $\frac{p_{01}^{(1)}}{p_{(-1)1}^{(1)}} = \frac{\lambda_{2}}{\mu_{1} r_{12}}$ and $\frac{p_{10}^{(2)}}{p_{1(-1)}^{(2)}} = \frac{\lambda_{1}}{\mu_{2} r_{21}}$.
\item[(b2)] $p_{10}^{(1)}=0$ if and only if $p_{10}^{(0)} = 0$. Similarly, $p_{01}^{(2)}=0$ if and only if $p_{01}^{(0)} = 0$. 
\end{itemize}
Namely, the queueing network flexible at the boundary is structure-reversible if and only if the conditions (b1), (b2), (a3) and (a5) are satisfied.

We next consider a special case of the queueing network flexible at the boundary for constructing an example such that it is structure-reversible but does not have a product form stationary distribution.
For this, we put the transition probabilities at the boundary of the queueing network as follows.
\begin{eqnarray*}
&&p_{10}^{(1)} = p_{10}^{(+)} = \lambda_{1}, \quad p_{01}^{(2)} = p_{01}^{(+)} = \lambda_{2}, \quad p_{(-1)0}^{(1)} = p_{(-1)0}^{(+)} = \mu_{1} r_{10}, \quad p_{0(-1)}^{(2)} = p_{0(-1)}^{(+)} = \mu_{2} r_{20}, \\
&&p_{01}^{(1)} = \lambda_{2} + \lambda_{2}^{(1)}, \quad p_{10}^{(2)} = \lambda_{1} + \lambda_{1}^{(2)}, \quad p_{10}^{(0)} = \lambda_{1} + \lambda_{1}^{(0)}, \quad p_{01}^{(0)} = \lambda_{2} + \lambda_{2}^{(0)},\\
&&p_{(-1)1}^{(1)} = \mu_{1} (r_{12} + r_{11}), \quad p_{1(-1)}^{(2)} = \mu_{2} (r_{21}+ r_{22}), \quad p_{00}^{(1)} = \mu_{2} - \lambda_{2}^{(1)},\\
&& p_{00}^{(2)} = \mu_{1} - \lambda_{1}^{(2)}, \quad p_{00}^{(0)} = \mu_{1} + \mu_{2} - \lambda_{1}^{(0)} - \lambda_{2}^{(0)}, 
\end{eqnarray*}
 where we assume $0 \le \lambda_{2}^{(1)} \le \mu_{2}$, $0 \le \lambda_{1}^{(2)} \le \mu_{1}$ and $0 \le \lambda_{1}^{(0)} +  \lambda_{2}^{(0)} \le \mu_{1} + \mu_{2}$.
We refer to this queueing network  as a discrete time Jackson network with extra arrivals at empty nodes. In \fig{transtion_probability_JEE}, we depict the transition diagram of this queueing network.
\begin{figure}[htbp]
	\centering
	\includegraphics[height=0.35\textheight]{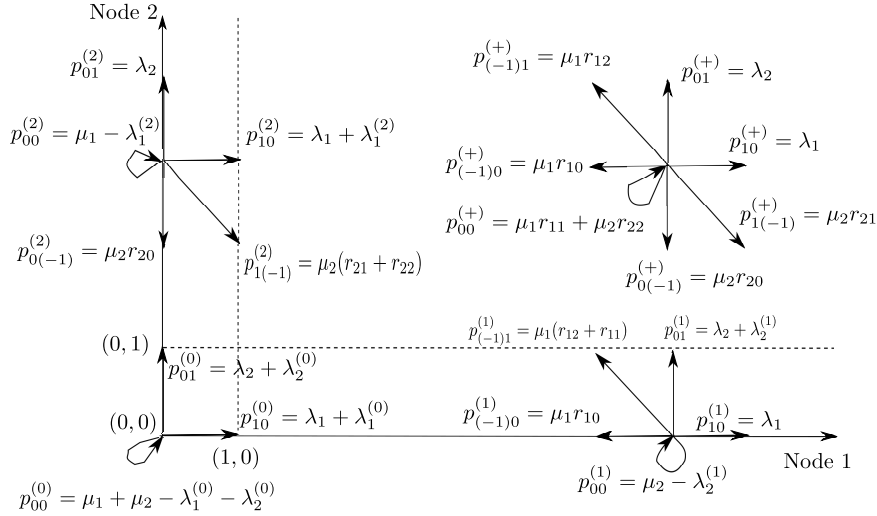}
	\caption{Transition diagram}
	\label{fig:transtion_probability_JEE}
\end{figure}

For the discrete time Jackson network with extra arrivals at empty nodes, it is easy to verify that the condition (b2) is satisfied.  Assume that 
\begin{eqnarray}
\label{eqn:Jackson_assumption}
\frac{\lambda_{2} + \lambda_{2}^{(1)}}{\lambda_{2}}= \frac{r_{12} + r_{11}}{r_{12}}, \quad \frac{\lambda_{1} + \lambda_{1}^{(2)}}{\lambda_{1}} = \frac{r_{21} + r_{22}}{r_{21}}.
\end{eqnarray}
Then, 
\begin{eqnarray*}
\frac{p_{01}^{(1)}}{p_{(-1)1}^{(1)}} = \frac{\lambda_{2} + \lambda_{2}^{(1)}}{\mu_{1} (r_{12} + r_{11})} = \frac{\lambda_{2}}{\mu_{1} r_{12}}, \quad \frac{p_{10}^{(2)}}{p_{1(-1)}^{(2)}} = \frac{\lambda_{1} + \lambda_{1}^{(2)}}{\mu_{2} (r_{21} + r_{22})} = \frac{\lambda_{1}}{\mu_{2} r_{21}}.
\end{eqnarray*}
Hence, the condition (b1) is satisfied . Moreover, we have 
\begin{eqnarray}
\label{eqn:c_10 c_20 c+}
c^{(1+)} = 1 + \frac{\lambda_{2}^{(1)}}{\lambda_{2}}, \quad c^{(2+)} =  1 + \frac{\lambda_{1}^{(2)}}{\lambda_{1}}, \quad c^{(10)} = 1 + \frac{\lambda_{1}^{(0)}}{\lambda_{1}}, \quad  c^{(20)} = 1 + \frac{\lambda_{2}^{(0)}}{\lambda_{2}}. 
\end{eqnarray}
We also assume the condition (a3), that is, $c^{(10)}c^{(1+)} = c^{(20)}c^{(2+)}$ which is equivalent to 
\begin{eqnarray}
\label{eqn:Jackson_assumption2}
\left(1 + \frac{\lambda_{1}^{(0)}}{\lambda_{1}} \right)\left(1 + \frac{\lambda_{2}^{(1)}}{\lambda_{2}} \right) = \left(1 + \frac{\lambda_{2}^{(0)}}{\lambda_{2}} \right)\left(1 + \frac{\lambda_{1}^{(2)}}{\lambda_{1}} \right).
\end{eqnarray}
Moreover, for the condition (a5), we assume the following conditions.
\begin{eqnarray}
\label{eqn:parameter_condition_1}
&&\lambda_{1} = \lambda_{2} \equiv \lambda,\\
\label{eqn:parameter_condition_2}
&&\frac{r_{12}}{r_{10}} = \frac{r_{21}}{r_{20}}. 
\end{eqnarray}
Then,  we confirm that the condition (a5) is satisfied with $\eta_{1} = \rho_{1}$ and $\eta_{2} = \rho_{2}$ (see, \app{computation}). Thus, under the conditions \eqn{Jackson_assumption}--\eqn{parameter_condition_2}, the discrete time Jackson network with extra arrivals at empty nodes is structure-reversible.  

From \eqn{Jackson_assumption}, \eqn{Jackson_assumption2} and \eqn{parameter_condition_1}, we have 
\begin{eqnarray}
\label{eqn:lambda_condition1}
&&\frac{\lambda + \lambda_{1}^{(0)}}{\lambda + \lambda_{2}^{(0)}} = \frac{\lambda  + \lambda_{1}^{(2)}}{\lambda + \lambda_{2}^{(1)}},\\
\label{eqn:lambda_condition2}
&&\lambda_{1}^{(2)} = \frac{r_{22}}{r_{21}}\lambda, \quad \lambda_{1}^{(2)} = \frac{r_{11}}{r_{12}}\lambda.
\end{eqnarray}
These imply that $\lambda_{1}^{(2)}$, $\lambda_{2}^{(1)}$, $\lambda_{1}^{(0)}$ and $\lambda_{2}^{(0)}$ are determined by $\lambda$, $r_{11}$, $r_{12}$, $r_{21}$ and $r_{22}$.
It is easy to see that the condition \eqn{Jackson_assumption2} is satisfied if 
\begin{eqnarray}
\label{eqn:product_form_lambda}
\lambda_{1}^{(2)} = \lambda_{1}^{(0)}, \quad \lambda_{2}^{(1)} = \lambda_{2}^{(0)},
\end{eqnarray}
and we obviously have \eqn{Condition_product1} and \eqn{Condition_product2}.  
Thus, from \cor{reversibility_product form}, under the structure-reversibility conditions \eqn{Jackson_assumption}--\eqn{parameter_condition_2}, the stationary distribution of this network has a product form solution if and only if \eqn{product_form_lambda} is satisfied. 
From \eqn{lambda_condition1} and \eqn{lambda_condition2}, there may be the case where \eqn{product_form_lambda} does not hold while \eqn{Jackson_assumption}--\eqn{parameter_condition_2} are satisfied. We give such an example below.
\begin{align}
&\lambda_{1} = \lambda_{2} = 0.0667, &&\mu_{1} = 0.4000, &&\mu_{2} = 0.4667, &&r_{10} = 0.368, &&r_{11} = r_{12} = 0.3158, \nonumber\\
&r_{20} = 0,3784, &&r_{21} = 0.3243, &&r_{22} = 0.2973, &&\lambda_{2}^{(1)} = 0.0667, &&\lambda_{1}^{(2)} = 0.0611,\nonumber\\
\label{eqn:JN_with_additional_arrivals}
& \lambda_{1}^{(0)} = 0.2104, &&\lambda_{2}^{(0)} = 0.2225.
\end{align}
Thus, the discrete time Jackson network with additional arrivals at empty nodes may not have a product form solution when it is structure-reversible. 
For this example, we also have $\rho_{1}^{-1} = 2.2105$ and $\rho_{2}^{-1} = 2.6486$ (see \fig{Gamma_JEE}).

\begin{figure}[htbp]
	\centering
	\includegraphics[height=0.35\textheight]{jackson-ExtraArrival.eps}
	\caption{The curves of $\Gamma_{i}$ satisfying  \eqn{JN_with_additional_arrivals}}
	\label{fig:Gamma_JEE}
\end{figure}

\if0
\section{Concluding remarks}
\setnewcounter
\label{sec:remark}

 In this paper, we assumed the condition (ii), that is,  the random walk $\{\vc{Y}_{\ell} \}$ is irreducible. This condition may be essential for \thr{reversibility condition}, but the structure-reversibility condition can be obtained for a reflecting random walk without this irreducibility condition. In \app{singular_random_walk}, we give an example for $\{\vc{Y}_{\ell} \}$ to be not irreducible.

There are few study about the stationary distribution of a $d$-dimensional reflecting random walk for $d \ge 3$. This random walk can represent the queuing networks with more than two nodes. Thus, we are interested in the stationary analysis of the multidimensional reflecting random walk. However, this is a difficult problem because the influences of the boundary is not simpler than for two dimensional case.
There are many challenging problems for multidimensional reflecting random walk.
\fi

\section*{Acknowledgements}
We are grateful to an anonymous referee for its helpful comments. This research was supported in part by Japan Society for the Promotion of Science under grant No.\ 24310115.

\appendix

\section{Singular reflecting random walk}
\setnewcounter
\label{app:singular_random_walk}

In this section, we obtain a structure-reversibility condition in the special case.
For this, we assume the following conditions. 
\begin{eqnarray}
\label{eqn:singular_condition1}
&&p^{(+)}_{ij} = 0, \quad i=0,\pm1, j=\pm1, \\
\label{eqn:singular_condition2}
&&p^{(+)}_{i0} > 0, \quad i=0,\pm1.
\end{eqnarray}
Then, it is easy to see that the random walk $\{\vc{Y}_{\ell}\}$ is not irreducible. 
This reflecting random walk is referred to as a singular reflecting random walk, which is introduced by \cite{FayoIasnMaly1999}. 
From irreducibility condition (i), we must have, for some $i,j=0,1$ 
\begin{eqnarray}
\label{eqn:lineary_condition1}
&&p_{10}^{(1)} > 0, \quad p_{(-1)0}^{(1)} > 0,\\
\label{eqn:lineary_condition2}
&&p_{i1}^{(2)} > 0, \quad p_{j(-1)}^{(2)} > 0.
\end{eqnarray}
In \fig{singular_randomwalk}, we depict transition diagram of the reflecting random walk satisfying the conditions \eqn{singular_condition1}--\eqn{lineary_condition2}.
\begin{figure}[htbp]
 	\centering
	\includegraphics[height=0.25\textheight]{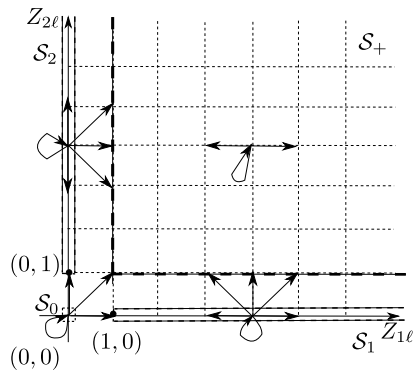}
	\caption{Singular reflecting random walk satisfying the irreducible condition (i)}
	\label{fig:singular_randomwalk}
\end{figure}

This reflecting random walk is structure-reversible if and only if the following conditions hold.
\begin{eqnarray*}
\begin{array}{llll}
&&\pi(n_{1},n_{2}) = \eta_{1}^{n_{1} -1}\eta_{2}^{n_{2} -1} \pi(1,1), &n_{1} \ge 1, n_{2} \ge 1, \\
&&\pi(n_{1},0) = \alpha_{1}^{n_{1} -1} \pi(1,0), &n_{1} \ge 1, \\
&&\pi(0,n_{2}) = \eta_{2}^{n_{2}-1} \pi(0,1), &n_{2} \ge 1, \\
&&\pi(1,0) = c^{(10)} \alpha_{1} \pi(0,0),\\
&&\pi(0,1) = c^{(20)}\eta_{2} \pi(0,0),  \\
&&\pi(1,1) = c^{(20)}c^{(2+)} \eta_{1}\eta_{2} \pi(0,0), \\
&& p_{i1}^{(1)} = 0, \quad p_{1j}^{(2)} = 0, \quad p_{11}^{(0)} = 0,& i=0,\pm1, j = \pm1,
\end{array}
\end{eqnarray*}
where $\eta_{1},\eta_{2},\alpha_{1} \in (0,1)$ and $c^{(2+)}, c^{(10)}, c^{(20)} > 0$ are given by
\begin{eqnarray*}
&&\eta_{1} = \frac{p_{10}^{(+)}}{p_{(-1)0}^{(+)}}, \quad \eta_{2} = \frac{p_{01}^{(2)}}{p_{0(-1)}^{(2)}}, \quad \alpha_{1} = \frac{p_{10}^{(1)}}{p_{(-1)0}^{(1)}},\\
&&c^{(2+)} = \frac{p_{10}^{(2)}}{p_{10}^{(+)}}, \quad c^{(10)} = \frac{p_{10}^{(0)}}{p_{10}^{(1)}}, \quad c^{(20)} = \frac{p_{01}^{(0)}}{p_{01}^{(2)}}.
\end{eqnarray*}
In what follows, we will derive these conditions.

 We assume that $\{\vc{Z}_{\ell}\}$ is structure-reversible. Then, for $i,j = 0,\pm1$ and $(n_{1},n_{2}) \in {\sr S}_{k}$,  we can define the following probability function $\tilde{p}_{ij}^{(k)}$.
\begin{eqnarray*}
\tilde{p}_{ij}^{(k)} = \mathbb{P}(\tilde{\vc{Z}}_{\ell+1} = (n_{1}+i,n_{2}+j) | \tilde{\vc{Z}}_{\ell} = (n_{1},n_{2})).
\end{eqnarray*}
From \eqn{transition_of_reversed}, we have 
\begin{eqnarray*}
&&\tilde{p}_{10}^{(+)} = \mathbb{P}(\tilde{\vc{Z}}_{\ell + 1} = (n_{1} + 1,n_{2}) | \tilde{\vc{Z}}_{\ell} = (n_{1},n_{2})) = \frac{\pi(n_{1}+1,n_{2})}{\pi(n_{1},n_{2})} p_{(-1)0}^{(+)} > 0, \quad (n_{1},n_{2}) \in {\sr S}_{+}.\\
\end{eqnarray*}
Thus, if $\{\vc{Z}_{\ell}\}$ is structure-reversible, then we must have
\begin{eqnarray}
\label{eqn:singular1_stationary1}
\pi(n_{1} + 1,n_{2}) = \eta_{1} \pi(n_{1},n_{2}), \quad (n_{1},n_{2}) \in {\sr S}_{+},
\end{eqnarray}
for some $\eta_{1} \in (0,1)$. 
Similarly,  from \eqn{lineary_condition1}, we have 
\begin{eqnarray*}
&&\tilde{p}_{10}^{(1)} = \mathbb{P}(\tilde{\vc{Z}}_{\ell + 1} = (n_{1} + 1,0) | \tilde{\vc{Z}}_{\ell} = (n_{1},0)) = \frac{\pi(n_{1}+1,0)}{\pi(n_{1},0)} p_{(-1)0}^{(1)} > 0, \quad (n_{1},0) \in {\sr S}_{1}.\\
\end{eqnarray*}
Thus, for some $\alpha_{1} \in (0,1)$, 
\begin{eqnarray}
\label{eqn:singular1_stationary2}
\pi(n_{1} + 1,0) = \alpha_{1}\pi(n_{1},0), \quad (n_{1},0) \in {\sr S}_{1}.
\end{eqnarray}
On the other hand, we have, for $i = 0,\pm1$ and  $(n_{1} - i,n_{2}-1), (n_{1},n_{2}) \in {\sr S}_{+}$, 
\begin{eqnarray*}
\tilde{p}_{(-i)(-1)}^{(+)} = \mathbb{P}(\tilde{\vc{Z}}_{\ell + 1} = (n_{1} -i,n_{2} -1) | \tilde{\vc{Z}}_{\ell} = (n_{1},n_{2})) = \frac{\pi(n_{1}-i,n_{2} - 1)}{ \pi(n_{1},n_{2})}p_{i1}^{(+)} = 0,
\end{eqnarray*}
since we assume \eqn{singular_condition1}. For $n_{1} > 1$, 
\begin{eqnarray*}
\tilde{p}_{(-i)(-1)}^{(+)} = \mathbb{P}(\tilde{\vc{Z}}_{\ell + 1} = (n_{1} -i,0) | \tilde{\vc{Z}}_{\ell} = (n_{1},1) ) = \frac{\pi(n_{1} -i,0)}{\pi(n_{1},1)} p_{i1}^{(1)}.
\end{eqnarray*}
Thus, we must have $p_{i1}^{(1)} = 0$ for any $i=0,\pm1$. Similarly, we have,  for $j=\pm1$ and $(n_{1} - 1,n_{2} - j), (n_{1},n_{2}) \in {\sr S}_{+}$, 
\begin{eqnarray*}
\tilde{p}_{(-1)(-j)}^{(+)} = \mathbb{P}(\tilde{\vc{Z}}_{\ell + 1} = (n_{1} -1,n_{2} -j) | \tilde{\vc{Z}}_{\ell} = (n_{1},n_{2})) = \frac{\pi(n_{1}-1,n_{2} - j)}{ \pi(n_{1},n_{2})}p_{1j}^{(+)} = 0,
\end{eqnarray*}
from \eqn{singular_condition1}.  Since
\begin{eqnarray*}
\tilde{p}_{(-1)(-j)}^{(+)} = \mathbb{P}(\tilde{\vc{Z}}_{\ell + 1} = (0,n_{2} -j) | \tilde{\vc{Z}}_{\ell} = (1,n_{2})) = \frac{\pi(0,n_{2}-j)}{\pi(1,n_{2})} p_{1j}^{(2)}, 
\end{eqnarray*}
we also have $p_{1j}^{(2)} = 0$ for $j=\pm1$, and from \eqn{lineary_condition2}, $p_{01}^{(2)} > 0$ and $p_{0(-1)}^{(2)} > 0$. 
Moreover,  for $n_{2} \ge 1$, 
\begin{eqnarray*}
&&\tilde{p}_{01}^{(2)} = \mathbb{P}(\tilde{\vc{Z}}_{\ell + 1} = (0,n_{2} + 1) | \tilde{\vc{Z}}_{\ell} = (0,n_{2})) = \frac{\pi(0,n_{2} + 1)}{\pi(0,n_{2})} p_{0(-1)}^{(2)} > 0.\\
\end{eqnarray*}
Thus, for $\eta_{2} \in (0,1)$, 
\begin{eqnarray*}
\pi(0,n_{2} + 1) = \eta_{2} \pi(0,n_{2}), \quad (0,n_{2}) \in {\sr S}_{2}.
\end{eqnarray*}
We next have, for any $n_{2} \ge 1$ and $n_{1} \ge 1$,
\begin{eqnarray*}
&&\tilde{p}_{(-1)0}^{(+)} = \mathbb{P}(\tilde{\vc{Z}}_{\ell + 1} = (n_{1},n_{2}) | \tilde{\vc{Z}}_{\ell} = (n_{1} + 1,n_{2})) = \frac{\pi(n_{1},n_{2})}{\pi(n_{1} + 1,n_{2})} p_{10}^{(+)} = \eta_{1}^{-1} p_{10}^{(+)} > 0,\\ 
&&\tilde{p}_{(-1)0}^{(+)} = \mathbb{P}(\tilde{\vc{Z}}_{\ell + 1} = (0,n_{2}) | \tilde{\vc{Z}}_{\ell} = (1,n_{2})) = \frac{\pi(0,n_{2})}{\pi(1,n_{2})} p_{10}^{(2)} = \frac{\eta_{2}^{n_{2}-1}\pi(0,1)}{\pi(1,n_{2})} p_{10}^{(2)} = \eta_{1}^{-1} p_{10}^{(+)}.
\end{eqnarray*}
Hence, 
\begin{eqnarray*}
&&\pi(1,n_{2}) = \frac{p_{10}^{(2)}}{p_{10}^{(+)}}\eta_{1} \eta_{2}^{n_{2}-1}\pi(0,1), \quad n_{2} \ge 1,\\
\end{eqnarray*}
which implies
\begin{eqnarray*}
&&\pi(1,1) = \frac{p_{10}^{(2)}}{p_{10}^{(+)}}\eta_{1} \pi(0,1) = c^{(2+)}\eta_{1} \pi(0,1),\\
&&\pi(n_{1},n_{2}) = \eta_{1}^{n_{1}- 1} \pi(1,n_{2})  = c^{(2+)} \eta_{1}^{n_{1}-1} \eta_{2}^{n_{2}-1} \pi(1,1), \quad (n_{1},n_{2}) \in {\sr S}_{+}.\\
\end{eqnarray*}
On the other hand
\begin{eqnarray*}
&&\tilde{p}_{(-1)(-1)}^{(+)} = \mathbb{P}(\tilde{\vc{Z}}_{\ell + 1}= (0,0) | \tilde{\vc{Z}}_{\ell}= (1,1) ) = \frac{\pi(0,0)}{\pi(1,1)} p_{11}^{(0)} = 0, \\
&&\tilde{p}_{(-1)0}^{(1)} = \mathbb{P}(\tilde{\vc{Z}}_{\ell + 1}= (0,0) | \tilde{\vc{Z}}_{\ell}= (1,0) ) = \frac{\pi(0,0)}{\pi(1,0)} p_{10}^{(0)} = \frac{\pi(n_{1}-1,0)}{\pi(n_{1},0)}p_{10}^{(1)} = \alpha_{1}^{-1} p_{10}^{(1)},\\
&&\tilde{p}_{0(-1)}^{(2)} = \mathbb{P}(\tilde{\vc{Z}}_{\ell + 1}= (0,0) | \tilde{\vc{Z}}_{\ell}= (0,1) ) = \frac{\pi(0,0)}{\pi(0,1)} p_{01}^{(0)} = \frac{\pi(0,n_{2}-1)}{\pi(0,n_{2})}p_{01}^{(2)} = \eta_{2}^{-1} p_{01}^{(2)}.
\end{eqnarray*}
It follows from the irreducible assumption (i), \eqn{singular_condition1}, $p_{i1}^{(1)} = 0$ for any $i=0,\pm1$ and  $p_{1j}^{(2)} = 0$ for $j=\pm1$ that  $p_{10}^{(1)}, p_{01}^{(2)}> 0$. Thus, we immediately have $p_{11}^{(0)} = 0$, $p_{10}^{(0)},p_{01}^{(0)} > 0$ and 
\begin{eqnarray*}
&&\pi(1,0) = \frac{p_{10}^{(0)}}{p_{10}^{(1)}} \alpha_{1} \pi(0,0) = c^{(10)} \alpha_{1} \pi(0,0), \\
&&\pi(0,1) = \frac{p_{01}^{(0)}}{p_{01}^{(2)}} \eta_{2} \pi(0,0) = c^{(20)} \eta_{2} \pi(0,0),\\
&& \pi(1,1) = c^{(2+)}\eta_{1} \pi(0,1) = c^{(20)}c^{(2+)}\eta_{1} \eta_{2} \pi(0,0).
\end{eqnarray*}
We finally obtain $\eta_{1}, \eta_{2}$ and $\alpha_{1}$. We have the following stationary equations.
\begin{eqnarray*}
&&(p_{10}^{(+)} + p_{(-1)0}^{(+)}) \pi(n_{1},n_{2}) = p_{(-1)0}^{(+)} \pi(n_{1}+1,n_{2}) + p_{10}^{(+)} \pi(n_{1}-1,n_{2}), \\
&&(p_{10}^{(1)} + p_{(-1)0}^{(1)})  \pi(n_{1},0) = p_{(-1)0}^{(1)} \pi(n_{1}+1,0) + p_{10}^{(1)} \pi(n_{1}-1,0).
\end{eqnarray*}
From \eqn{singular1_stationary1} and \eqn{singular1_stationary2}, we have 
\begin{eqnarray*}
&&(p_{10}^{(+)} + p_{(-1)0}^{(+)}) \pi(n_{1},n_{2}) = p_{(-1)0}^{(+)}\eta_{1}\pi(n_{1},n_{2}) + p_{10}^{(+)} \eta_{1}^{-1}\pi(n_{1},n_{2}), \\
&&(p_{10}^{(1)} + p_{(-1)0}^{(1)}) \pi(n_{1},0) = p_{(-1)0}^{(1)} \alpha_{1}\pi(n_{1},0) + p_{10}^{(1)} \alpha_{1}\pi(n_{1}-1,0).
\end{eqnarray*}
Thus, we obtain 
\begin{eqnarray*}
\eta_{1} = \frac{p_{10}^{(+)}}{p_{(-1)0}^{(+)}} < 1, \quad \alpha_{1} = \frac{p_{10}^{(1)}}{p_{(-1)0}^{(1)}} < 1.
\end{eqnarray*}
Repeatedly, from the stationary equations, 
\begin{eqnarray*}
&&(p_{01}^{(2)} + p_{0(-1)}^{(2)} + p_{10}^{(2)}) \pi(0,n_{2}) = p_{0(-1)}^{(2)} \pi(0,n_{2} + 1) + p_{01}^{(2)} \pi(0,n_{2} -1) + p_{(-1)0}^{(+)}\pi(1,n_{2}),
\end{eqnarray*}
we have 
\begin{eqnarray*}
(p_{01}^{(2)} + p_{0(-1)}^{(2)} + p_{10}^{(2)}) &=& p_{0(-1)}^{(2)} \eta_{2} + p_{01}^{(2)} \eta_{2}^{-1} + p_{(-1)0}^{(+)} \frac{p_{10}^{(2)}}{p_{10}^{(+)}} \eta_{1}\\
&=& p_{0(-1)}^{(2)} \eta_{2} + p_{01}^{(2)} \eta_{2}^{-1} + p_{(-1)0}^{(+)} \frac{p_{10}^{(2)}}{p_{10}^{(+)}} \frac{p_{10}^{(+)}}{p_{(-1)0}^{(+)}}\\
&=& p_{0(-1)}^{(2)} \eta_{2} + p_{01}^{(2)} \eta_{2}^{-1} + p_{10}^{(2)}.
\end{eqnarray*}
Hence, 
\begin{eqnarray*}
\eta_{2} = \frac{p_{01}^{(2)}}{p_{0(-1)}^{(2)}} < 1.
\end{eqnarray*}

\section{Proof of \lem{con_c1}}
\setnewcounter
\label{app:proof_c1}
We first prove the condition (a1). 
\if0
For $n_{2} \ge 1$, substituting $\vc{n} = (1,n_{2})$ and $\vc{n}' = (0,n_{2})$ into \eqn{RRRW_equation}, we have 
\begin{eqnarray*}
\mathbb{P}(\tilde{\vc{Z}}_{\ell+1} = (0,n_{2}) | \tilde{\vc{Z}}_{\ell} = (1,n_{2})) = \mathbb{P}(\tilde{\vc{X}}^{(+)} = (-1,0)),
\end{eqnarray*}
since $(1,n_{2}) \in \sr{S}_{+}$. From \eqn{reflecting_random_walk}, \eqn{transition_of_reversed} and \lem{sta_geo}, we have 
\begin{eqnarray}
\label{eqn:compute_tildeX}
\mathbb{P}(\tilde{\vc{X}}^{(+)} = (-1,0)) &=& \frac{\pi(0,n_{2})}{\pi(1,n_{2})} \mathbb{P}(\vc{Z}_{\ell + 1} = \vc{n} | \vc{Z}_{\ell} = \vc{n}') \nonumber\\
&=& \frac{\pi(0,1)}{\pi(1,1)} \mathbb{P}(\vc{X}^{(2)} = (1,0)) \nonumber\\
&=& \frac{\pi(0,1)}{\pi(1,1)} p_{10}^{(2)}.
\end{eqnarray}
Thus, from \eqn{tra_pro} and \eqn{geo_decay}, we have 
\begin{eqnarray*}
\eta_{1}^{-1} p_{10}^{(+)} = \frac{\pi(0,1)}{\pi(1,1)} p_{10}^{(2)}.
\end{eqnarray*}
This equation implies that $p_{10}^{(+)} = 0$ if and only if $p_{10}^{(2)} = 0$.
Conversely, if $p_{10}^{(2)},p_{10}^{(+)} > 0$, then we must have 
\begin{eqnarray}
\label{eqn:constant_c_02}
\frac{p_{10}^{(2)}}{p_{10}^{(+)}} = \frac{\pi(1,1)}{\pi(0,1)} \eta_{1}^{-1}.
\end{eqnarray}
On the other hand, let us consider the case $\vc{n} = (1,n_{2})$ and $\vc{n}' = (0,n_{2} + i)$ for $n_{2} \ge 2$ and $i=\pm1$. Since we can replace $n_{2}$, $(-1,0)$ and $(0,1)$ by $n_{2} + i$, $(-1,i)$ and $(1,i)$ in \eqn{tra_pro} and \eqn{compute_tildeX} respectively, we have
\begin{eqnarray*}
\eta_{1}^{-1} p_{1i}^{(+)} = \frac{\pi(0,1)}{\pi(1,1)} p_{1i}^{(2)}, \quad i=\pm1.
\end{eqnarray*}
From the condition (ii), note that at least one of $p_{1i}^{(+)}$ for $i=0,\pm1$ is positive. Thus, we must have $ c^{(2+)} > 0$ and $c^{(2+)} p^{(+)}_{1i} = p^{(2)}_{1i}$ if $\{\vc{Z}_{\ell}\}$ is structure-reversible. 
\fi
For $n_1 \ge 1$ and $i=0,\pm 1$, substituting $\vc{n} = (n_{1},1)$ and $\vc{n}' = (n_{1} - i, 0)$ into \eqn{RRRW_equation},
\begin{eqnarray*}
\mathbb{P}(\tilde{\vc{Z}}_{\ell+1} = (n_{1}-i,0) | \tilde{\vc{Z}}_{\ell} = (n_{1},1)) = \mathbb{P}(\tilde{\vc{X}}^{(+)} = -(i,1)).
\end{eqnarray*}
Since $\vc{n}' \in \sr{S}_{1}$, we have, using \lem{sta_geo},  
\begin{eqnarray*}
\mathbb{P}(\tilde{\vc{X}}^{(+)} = -(i,1)) &=& \frac{\pi(n_{1} - i,0)}{\pi(n_{1},1)} \mathbb{P}(\vc{Z}_{\ell + 1} = (n_{1},1) | \vc{Z}_{\ell} = (n_{1} - i,0))\\
&=& \frac{\pi(1,0)}{\pi(1,1)}p^{(1)}_{i1}\eta_{1}^{-i}, \quad i=0,\pm1.
\end{eqnarray*}
On the other hand, from \lem{sta_geo}, for $\vc{n},\vc{n}' = \vc{n} - (i,1) \in \sr{S}_{+}$, we have 
\begin{eqnarray*}
\mathbb{P}(\tilde{\vc{X}}^{(+)} = -(i,1)) &=& \frac{\pi(\vc{n} - (i,1))}{\pi(\vc{n})}\mathbb{P}(\vc{X}^{(+)} = (i,1))\\
&=& \eta_{1}^{-i} \eta_{2}^{-1}p_{i1}^{(+)} , \quad i = 0,\pm1.
\end{eqnarray*}
Thus, we obtain, for all $i=0,\pm1$, 
\begin{eqnarray*}
\frac{\pi(1,0)}{\pi(1,1)} p_{i1}^{(1)} = \eta_{2}^{-1}p_{i1}^{(+)}.
\end{eqnarray*}
Thus, $p_{i1}^{(1)} = 0$ if and only if $p_{i1}^{(+)} = 0$.
In addition, by the condition (ii), for some $i = 0,\pm1$ such that $p_{i1}^{(+)} > 0$, 
\begin{eqnarray}
\label{eqn:sta_interior}
p_{i1}^{(1)} = \frac{\pi(1,1)}{\pi(1,0)} \eta_{2}^{-1} p_{i1}^{(+)} = c^{(1+)} p_{i1}^{(+)}.
\end{eqnarray}
Using a similar argument, we also have 
\begin{eqnarray}
\label{eqn:sta_interior2}
p_{1i}^{(2)} = \frac{\pi(1,1)}{\pi(0,1)} \eta_{1}^{-1} {p_{1i}^{(+)}} = c^{(2+)}{p_{1i}^{(+)}}.
\end{eqnarray}
These complete the proof of the condition (a1).

We next obtain the conditions (a2), (a3) and (a4). Since $\{\tilde{\vc{Z}}_{\ell}\}$ has the homogeneous transitions as long as $\tilde{\vc{Z}}_{\ell} \in  \sr{S}_{1}$, we have, from \eqn{transition_of_reversed}, \eqn{RRRW_equation} and \lem{sta_geo}, 
\begin{eqnarray*}
\mathbb{P}(\tilde{\vc{X}}^{(1)} = (-1,0)) = \frac{\pi(0,0)}{\pi(1,0)} p_{10}^{(0)} = \frac{\pi(n-1,0)}{\pi(n,0)} p_{10}^{(1)} = \eta_{1}^{-1} p_{10}^{(1)}, \quad \forall n \ge 1.
\end{eqnarray*}
Similarly, for $\tilde{\vc{X}}^{(+)} = (-1,-1)$ and for any $n \ge 1$, we have 
\begin{eqnarray}
\label{eqn:sta_form1}
\mathbb{P}(\tilde{\vc{X}}^{(+)} = (-1,-1)) = \frac{\pi(0,0)}{\pi(1,1)} p_{11}^{(0)} = \frac{\pi(n-1,0)}{\pi(n,1)} p_{11}^{(1)} = \frac{\pi(1,0)}{\pi(1,1)}\eta_{1}^{-1} p_{11}^{(1)}.
\end{eqnarray}
Hence, for any $i=0,1$, we must have $p_{1i}^{(0)} = p_{1i}^{(1)} = 0$ if $c^{(10)} = 0$, and $c^{(10)} p_{1i}^{(1)}  = p_{1i}^{(0)}$ if $c^{(10)} >0$. 
In addition, if $c^{(10)} > 0$, then we have, from \eqn{sta_interior}--\eqn{sta_form1}
\begin{eqnarray*}
&&\pi(1,0) = c^{(10)} \eta_{1} \pi(0,0), \\
&&\pi(1,1) = c^{(1+)} \eta_{2} \pi(1,0) = c^{(10)} c^{(1+)} \eta_{1} \eta_{2}\pi(0,0),\\
&&\pi(0,1) = \frac{1}{c^{(2+)}} \eta_{1}^{-1} \pi(1,1) = \frac{c^{(10)} c^{(1+)}}{c^{(2+)}}\eta_{2}\pi(0,0).
\end{eqnarray*}
Similarly, we obtain \eqn{stationary_equation_c10=0}--\eqn{stationary_dist11_2} if $c^{(20)} > 0$. We complete the proof.

\section{Proof of \cor{reversibility_product form}}
\label{app:product_form}
\setnewcounter

We first note that the stationary distribution has a product form if and only if
\begin{eqnarray}
\label{eqn:product_form_solution_app}
\pi(n_{1},n_{2}) = \nu_{n_{1}}^{(1)} \nu_{n_{2}}^{(2)}, \quad n_{1},n_{2} \in \mathbb{Z}_{+},
\end{eqnarray}
for some $\nu_{n_1}^{({1})}, \nu_{n_2}^{({2})} \in (0,1)$. Assume that the stationary distribution of $\{\vc{Z}_{\ell}\}$ satisfies \eqn{product_form_solution_app}. Since $\{\vc{Z}_{\ell}\}$ is structure-reversible, if $c^{(20)} > 0$, then from \eqn{stationary_equation_c10=0}--\eqn{stationary_dist11_2}, 
\begin{eqnarray}
\label{eqn:product}
&& \nu_{0}^{(1)} \nu_{0}^{(2)} = \pi(0,0),\\
\label{eqn:product_1}
&& \nu_{1}^{(1)} \nu_{0}^{(2)} = \frac{c^{(20)}c^{(2+)}}{c^{(1+)}} \eta_{1} \pi(0,0), \\
\label{eqn:product_2}
&& \nu_{0}^{(1)} \nu_{1}^{(2)} =  c^{(20)}\eta_{2} \pi(0,0),\\
\label{eqn:product_3}
&&\nu_{1}^{(1)} \nu_{1}^{(2)} = c^{(20)}c^{(2+)} \eta_{1}\eta_{2} \pi(0,0).
\end{eqnarray}
Substituting \eqn{product} and \eqn{product_1} into \eqn{product_2} and \eqn{product_3}, we have 
\begin{eqnarray*} 
\nu_{1}^{(2)} = c^{(20)} \eta_{2} \nu_{0}^{(2)} =  c^{(1+)}\eta_{2} \nu_{0}^{(2)}.
\end{eqnarray*}
Hence, the condition \eqn{Condition_product1} holds. We also have \eqn{Condition_product2} in a similar way.

We next suppose that the condition \eqn{Condition_product1} is satisfied. Then, we have 
\begin{align*}
&\pi(1,0) =\frac{c^{(20)}c^{(2+)}}{c^{(1+)}} \eta_{1} \pi(0,0) = c^{(2+)} \eta_{1} \pi(0,0).
\end{align*}
Thus, we can rewrite the stationary distribution of \thr{reversibility condition} as 
\begin{align*}
&\pi(n,0) = \eta_{1}^{n-1} \pi(1,0) = c^{(2+)} \eta_{1}^{n}\pi(0,0), &&n \ge 1,\\
&\pi(0,n) = \eta_{2}^{n-1} \pi(0,1) = c^{(20)} \eta_{2}^{n}\pi(0,0), &&n \ge 1,\\
&\pi(n_{1},n_{2}) = \eta_{1}^{n_{1}-1}\eta_{2}^{n_{2}-1} \pi(1,1) = c^{(20)}c^{(2+)} \eta_{1}^{n_{1}} \eta_{2}^{n_{2}} \pi(0,0), && n_{1},n_{2} \ge 1.
\end{align*}
For $n \in \mathbb{Z}_{+}$, we put $\nu_{n}^{(1)}$ and $\nu_{n}^{(2)}$ as follows.
\begin{align*}
&\nu_{n}^{(1)} = c^{(2+)} \eta_{1}^{n} \nu_{0}^{(1)}, && n \ge 1,\\
&\nu_{n}^{(2)} = c^{(20)} \eta_{2}^{n} \nu_{0}^{(2)}, && n \ge 1,\\
&\nu_{0}^{(1)}\nu_{0}^{(2)} = \pi(0,0), 
\end{align*}
and we have
\begin{align*}
& \nu_{n}^{(1)} \nu_{0}^{(2)} = c^{(2+)} \eta_{1}^{n} \pi(0.0) = \pi(n,0), &&n \ge 1,\\
& \nu_{0}^{(1)} \nu_{n}^{(2)} = c^{(20)} \eta_{2}^{n} \pi(0,0) = \pi(0,n), &&n \ge 1,\\
& \nu_{n_{1}}^{(1)} \nu_{n_{2}}^{(2)} = c^{(20)}c^{(2+)} \eta_{1}^{n_{1}}\eta_{2}^{n_{2}}\pi(0,0) = \pi(n_{1},n_{2}), &&n_{1},n_{2} \ge 1.\\
\end{align*} 
These equations imply \eqn{product_form_solution_app}. We similarly obtain \eqn{product_form_solution_app} if the condition \eqn{Condition_product2} holds.
 This completes the proof since we have either $c^{(10)} > 0$ or $c^{(20)} > 0$ (see \rem{irreducibility}).

\section{Product form but not structure-reversibility}
\label{app:Example of product}
\setnewcounter
We give an example of the reflecting random walk to have a product form stationary distribution but not to be structure-reversible. For transition probabilities of the reflecting random walk, let
\begin{align*}
&p_{00}^{(0)} = 0.0840821, && p_{10}^{(0)} = 0.49716, && p_{01}^{(0)} = 0.188503, && p_{11}^{(0)} = 0.230255,\\
&p_{00}^{(1)} = 0.0840821, && p_{10}^{(1)} = 0.126693, && p_{01}^{(1)} = 0.216346, && p_{11}^{(1)} = 0.15534, \\
&p_{(-1)0}^{(1)} = 0.1205,&& p_{(-1)1}^{(1)} = 0.297039,&& p_{00}^{(2)} = 0.363151, && p_{10}^{(2)} = 0.267565,\\
&p_{01}^{(2)} = 0.0246397,&& p_{11}^{(2)} = 0.025552, && p_{0(-1)}^{(2)} = 0.223309, && p_{1(-1)}^{(2)} = 0.0957827,\\
&p_{00}^{(+)} = 0.469511,&& p_{10}^{(+)} = 0.0449179, && p_{01}^{(+)} = 0.00497654, && p_{11}^{(+)} = 0.012212,\\
&p_{(-1)0}^{(+)} = 0.398019,&& p_{(-1)1}^{(+)} = 0.0045338,&& p_{0(-1)}^{(+)} = 0.0380278,&& p_{1(-1)}^{(+)} = 0.0278023,\\
&p_{(-1)(-1)}^{(+)} = 0.&&&&&&
\end{align*}
Then, we can see that (3.13), (3.14), (3.17) and (3.24)--(3.29) of \cite{LatoMiya2013} hold with $\eta_{1} = 0.3$ and $\eta_{2} = 0.2$, and therefore, the stationary distribution of this random walk has a product form. For this random walk, we have 
\begin{eqnarray*}
\frac{p_{(-1)1}^{(1)}}{p_{(-1)1}^{(+)}} = 65,5165, \qquad \frac{p_{01}^{(1)}}{p_{01}^{(+)}} = 43.4732, \qquad \frac{p_{11}^{(1)}}{p_{11}^{(+)}} = 12.7203.
\end{eqnarray*}
Hence, (a1) does not hold, and therefore, this random walk is not structure-reversible.

\section{Proof of \eqn{check_stationary1}}
\label{app:check_stationary1}
\setnewcounter
Recall that $c^{(10)} > 0$. Similar to the case $\vc{n} = (0,0)$, using the conditions (a1)--(a3), \eqn{stationary_dist2_1}--\eqn{stationary_dist11}, and \eqn{remark_c01}, we compute the left hand side of \eqn{check_stationary1} in the following ways.
\begin{itemize}
\item For $\vc{n} = ( 1 , 0 )$, we have $\vc{n}' \in \{(i,j); i=0,1,2,j=0,1\}$.
\begin{eqnarray*}
\label{eqn:check_10}
\lefteqn{\sum_{\vc{n'} \in {\sr S}}\frac{\pi({\vc{n'}})}{\pi(\vc{n})} \mathbb{P}(\vc{Z}_{\ell+1} = \vc{n} | \vc{Z}_{\ell} = \vc{n}') = p_{00}^{(1)} + \frac{\pi(0 , 0)}{\pi(1 , 0)} p_{10}^{(0)}  + \frac{\pi(2 , 0)}{\pi(1 , 0)} p_{(-1)0}^{(1)}}\nonumber\\
&& \qquad \qquad + \frac{\pi(0 , 1)}{\pi(1 , 0)} p_{1(-1)}^{(2)} + \frac{\pi(1 , 1)}{\pi(1 , 0)} p_{0(-1)}^{(+)} + \frac{\pi(2 , 1)}{\pi(1 , 0)} p_{(-1)(-1)}^{(+)} \nonumber\\
&&\quad = p_{00}^{(1)} + \frac{1}{c^{(10)}} p_{10}^{(0)} \eta_{1}^{-1}  + p_{(-1)0}^{(1)} \eta_{1} \nonumber\\
&& \qquad \qquad+ \frac{c^{(1+)}}{c^{(2+)}} p_{1(-1)}^{(2)} \eta_{1}^{-1} \eta_{2} + c^{(1+)} p_{0(-1)}^{(+)} \eta_{2} + c^{(1+)} p_{(-1)(-1)}^{(+)} \eta_{1} \eta_{2}\nonumber\\
&&\quad = p_{00}^{(1)} + p_{10}^{(1)} \eta_{1}^{-1} + p_{(-1)0}^{(1)} \eta_{1} + c^{(1+)} \left( p_{1(-1)}^{(+)} \eta_{1}^{-1} \eta_{2} + p_{0(-1)}^{(+)} \eta_{2} + p_{(-1)(-1)}^{(+)} \eta_{1} \eta_{2} \right)\nonumber\\
&&\quad = \gamma_{1} (\eta_{1}^{-1} , \eta_{2}^{-1} )=1.
\end{eqnarray*}

\item For $\vc{n} = ( 1 , 1 )$, we have $\vc{n}' \in \{(i,j); i,j = 0,1,2\}$.
\begin{eqnarray*}
&&\sum_{\vc{n'} \in {\sr S}}\frac{\pi({\vc{n'}})}{\pi(\vc{n})} \mathbb{P}(\vc{Z}_{\ell+1} = \vc{n} | \vc{Z}_{\ell} = \vc{n}') \\
&& \quad = p_{00}^{(+)} + \frac{\pi( 0 , 1 )}{\pi( 1 , 1 )} p_{10}^{(2)} + \frac{\pi( 0 , 2 )}{\pi( 1 , 1 )} p_{1(-1)}^{(2)} + \frac{\pi( 1 , 2 )}{\pi( 1 , 1 )} p_{0(-1)}^{(+)} + \frac{\pi( 2 , 2 )}{\pi( 1 , 1 )} p_{(-1)(-1)}^{(+)} \\
&&\quad \quad \quad + \frac{\pi( 2 , 1 )}{\pi( 1 , 1 )} p_{(-1)0}^{(+)} + \frac{\pi( 2 , 0 )}{\pi( 1 , 1 )} p_{(-1)1}^{(1)} + \frac{\pi( 1 , 0 )}{\pi( 1 , 1 )} p_{01}^{(1)} + \frac{\pi( 0 , 0 )}{\pi( 1 , 1 )} p_{11}^{(0)}\\
&&\quad = p_{00}^{(+)} + \frac{1}{c^{(2+)}} p_{10}^{(2)} \eta_{1}^{-1} +\frac{1}{c^{(2+)}} p_{1(-1)}^{(2)} \eta_{1}^{-1} \eta_{2} + p_{0(-1)}^{(+)} \eta_{2} + p_{(-1)(-1)}^{(+)} \eta_{1} \eta_{2}  \\
&&\quad \quad \quad + p_{(-1)0}^{(+)} \eta_{1} + \frac{1}{c^{(1+)}}  p_{(-1)1}^{(1)} \eta_{1} \eta_{2}^{-1} + \frac{1}{c^{(1+)}} p_{01}^{(1)} \eta_{2}^{-1} + \frac{1}{c^{(10)}c^{(1+)}} p_{11}^{(0)} \eta_{1}^{-1} \eta_{2}^{-1}\\
&&\quad = p_{00}^{(+)} + p_{10}^{(+)} \eta_{1}^{-1} + p_{1(-1)}^{(+)}  \eta_{1}^{-1} \eta_{2} + p_{0(-1)}^{(+)} \eta_{2} + p_{(-1)(-1)}^{(+)} \eta_{1} \eta_{2} + p_{(-1)0}^{(+)} \eta_{1} \\
&&\quad \quad \quad +  p_{(-1)1}^{(+)} \eta_{1} \eta_{2}^{-1} + p_{01}^{(+)} \eta_{2}^{-1} + p_{11}^{(+)} \eta_{1}^{-1} \eta_{2}^{-1}\\
&&\quad = \gamma_{+} (\eta_{1}^{-1} , \eta_{2}^{-1}) = 1.
\end{eqnarray*}

\item For $\vc{n} = ( n_{1} , 0 )$, we have $\vc{n}' \in \{(n_{1}+i,j), i=0,\pm1,j=0,1\}$.
\begin{eqnarray*}
&&\sum_{\vc{n'} \in {\sr S}}\frac{\pi({\vc{n'}})}{\pi(\vc{n})} \mathbb{P}(\vc{Z}_{\ell+1} = \vc{n} | \vc{Z}_{\ell} = \vc{n}') \\
&& \quad = p_{00}^{(1)} + \frac{\pi( n_{1} - 1 , 0 )}{\pi( n_{1} , 0 )} p_{10}^{(1)} + \frac{\pi( n_{1} + 1 , 0 )}{\pi( n_{1} , 0 )} p_{(-1)0}^{(1)} \\
&& \qquad \qquad + \frac{\pi( n_{1} - 1 , 1 )}{\pi( n_{1} , 0 )} p_{1(-1)}^{(+)} + \frac{\pi( n_{1}  , 1 )}{\pi( n_{1} , 0 )} p_{0(-1)}^{(+)} + \frac{\pi( n_{1} + 1 , 1 )}{\pi( n_{1} , 0 )} p_{(-1)(-1)}^{(+)}\\
&&\quad = p_{00}^{(1)} + p_{10}^{(1)} \eta_{1}^{-1} + p_{(-1)0}^{(1)} \eta_{1} + c^{(1+)} \left( p_{1(-1)}^{(+)} \eta_{1}^{-1} \eta_{2} + p_{0(-1)}^{(+)} \eta_{2} + p_{(-1)(-1)}^{(+)} \eta_{1} \eta_{2} \right)\\
&&\quad = \gamma_{1} (\eta_{1}^{-1} , \eta_{2}^{-1}) = 1.
\end{eqnarray*}

\item For $\vc{n} = ( n_{1} , 1 )$, we have $\vc{n}' \in \{(n_{1} + i,j)\; i=0,\pm1,j=0,1,2\}$.
\begin{eqnarray*}
&&\sum_{\vc{n'} \in {\sr S}}\frac{\pi({\vc{n'}})}{\pi(\vc{n})} \mathbb{P}(\vc{Z}_{\ell+1} = \vc{n} | \vc{Z}_{\ell} = \vc{n}') \\
&& \quad = p_{00}^{(+)} + \frac{\pi( n_{1} - 1 , 1 )}{\pi( n_{1} , 1 )} p_{10}^{(+)} + \frac{\pi( n_{1} - 1 , 2 )}{\pi( n_{1} , 1 )} p_{1(-1)}^{(+)} \\
&&\qquad \qquad   + \frac{\pi( n_{1} , 2 )}{\pi( n_{1} , 1 )} p_{0(-1)}^{(+)} + \frac{\pi( n_{1} + 1 , 2 )}{\pi( n_{1} , 1 )} p_{(-1)(-1)}^{(+)} + \frac{\pi( n_{1} + 1 , 1 )}{\pi( n_{1} , 1 )} p_{(-1)0}^{(+)}\\
&& \qquad \qquad   + \frac{\pi( n_{1} + 1 , 0 )}{\pi( n_{1} , 1 )} p_{(-1)1}^{(1)} + \frac{\pi( n_{1} , 0 )}{\pi( n_{1} , 1 )} p_{01}^{(1)} + \frac{\pi( n_{1} - 1 , 0 )}{\pi( n_{1} , 1 )} p_{11}^{(1)}\\
&&\quad = p_{00}^{(+)} + p_{10}^{(+)} \eta_{1}^{-1} + p_{1(-1)}^{(+)} \eta_{1}^{-1} \eta_{2} + p_{0(-1)}^{(+)} \eta_{2} + p_{(-1)(-1)}^{(+)} \eta_{1} \eta_{2}  \\
&&\qquad \qquad + p_{(-1)0}^{(+)} \eta_{1} + \frac{1}{c^{(1+)}} p_{(-1)1}^{(1)} \eta_{1} \eta_{2}^{-1} + \frac{1}{c^{(1+)}} p_{01}^{(1)} \eta_{2}^{-1} + \frac{1}{c^{(1+)}} p_{11}^{(1)} \eta_{1}^{-1} \eta_{2}^{-1}\\
&&\quad = p_{00}^{(+)} + p_{10}^{(+)} \eta_{1}^{-1} + p_{1(-1)}^{(+)} \eta_{1}^{-1} \eta_{2} + p_{0(-1)}^{(+)} \eta_{2} + p_{(-1)(-1)}^{(+)} \eta_{1} \eta_{2}  \\
&&\qquad \qquad + p_{(-1)0}^{(+)} \eta_{1}+ p_{(-1)1}^{(+)}  \eta_{1} \eta_{2}^{-1} +  p_{01}^{(+)} \eta_{2}^{-1} +  p_{11}^{(+)} \eta_{1}^{-1} \eta_{2}^{-1} \\
&&\quad = \gamma_{+} (\eta_{1}^{-1} , \eta_{2}^{-1})=1.
\end{eqnarray*}

\item For $\vc{n} = ( n_{1} , n_{2} )$, we have $\vc{n}' \in \{(n_{1}+i,n_{2}+j);i,j=0,\pm1\}$.
\begin{eqnarray*}
&&\sum_{\vc{n'} \in {\sr S}}\frac{\pi({\vc{n'}})}{\pi(\vc{n})} \mathbb{P}(\vc{Z}_{\ell+1} = \vc{n} | \vc{Z}_{\ell} = \vc{n}') \\
&& \quad = p_{00}^{(+)} + \frac{ \pi(n_{1} - 1 , n_{2} ) }{ \pi(n_{1} , n_{2}) } p_{10}^{(+)} + \frac{ \pi(n_{1} - 1 , n_{2} + 1 ) }{ \pi(n_{1} , n_{2}) } p_{1(-1)}^{(+)} \\
&&\qquad \qquad + \frac{ \pi(n_{1} , n_{2} + 1 ) }{ \pi(n_{1} , n_{2}) } p_{0(-1)}^{(+)} + \frac{ \pi(n_{1} + 1 , n_{2} + 1 ) }{ \pi(n_{1} , n_{2}) } p_{(-1)(-1)}^{(+)} + \frac{ \pi(n_{1} + 1 , n_{2} ) }{ \pi(n_{1} , n_{2}) } p_{(-1)0}^{(+)} \\
&& \qquad \qquad + \frac{ \pi(n_{1} + 1 , n_{2} - 1 ) }{ \pi(n_{1} , n_{2}) } p_{(-1)1}^{(+)} + \frac{ \pi(n_{1} , n_{2} - 1 ) }{ \pi(n_{1} , n_{2}) } p_{01}^{(+)} + \frac{ \pi(n_{1} - 1 , n_{2} - 1 ) }{ \pi(n_{1} , n_{2}) } p_{11}^{(+)}\\
&&\quad = p_{00}^{(+)} + p_{10}^{(+)} \eta_{1}^{-1} + p_{1(-1)}^{(+)} \eta_{1}^{-1} \eta_{2} + p_{0(-1)}^{(+)} \eta_{2} + p_{(-1)(-1)}^{(+)} \eta_{1} \eta_{2} \\
&&\qquad \qquad + p_{(-1)0}^{(+)} \eta_{1} + p_{(-1)1}^{(+)} \eta_{1} \eta_{2}^{-1} + p_{01}^{(+)} \eta_{2}^{-1} + p_{11}^{(+)} \eta_{1}^{-1} \eta_{2}^{-1}\\
&&\quad = \gamma_{+} (\eta_{1}^{-1} , \eta_{2}^{-1}) = 1.
\end{eqnarray*}

\end{itemize}

\section{Proof of $\gamma_{i}(\rho_{1}^{-1},\rho_{2}^{-1}) = 1$}
\label{app:computation}
\setnewcounter
Under the assumption \eqn{parameter_condition_1}, we rewrite the traffic equation \eqn{traffic_equation} as follows.
\begin{eqnarray}
\label{eqn:traffic_equation_2}
\alpha_{1} = \lambda  + \alpha_{2} r_{21} + \alpha_{1} r_{11}, \quad \alpha_{2} = \lambda  + \alpha_{1} r_{12} + \alpha_{2} r_{22}.
\end{eqnarray}
The solutions of these equations are given by
\begin{eqnarray*}
\alpha_{1} = \frac{\lambda(1 - r_{22}) + \lambda r_{21}}{1 - r_{11} - r_{22} - r_{12} r_{21} + r_{11}r_{22}}, \quad \alpha_{2} = \frac{\lambda(1 - r_{11}) + \lambda r_{12}}{1 - r_{11} - r_{22} - r_{12} r_{21} + r_{11}r_{22}}.
\end{eqnarray*}
We note that $1 - r_{11}  = r_{12} + r_{10}$ and $1 - r_{22}  = r_{21} + r_{20}$. From the assumption \eqn{parameter_condition_2}, 
\begin{eqnarray*}
(1 - r_{22})r_{12} = (1 - r_{11}) r_{21}, 
\end{eqnarray*}
and therefore, we have $\alpha_{1}r_{12} = \alpha_{2} r_{21}$. Substituting this equation into \eqn{traffic_equation_2}, we have $\lambda = \alpha_{1} r_{10} = \alpha_{2} r_{20}$. For $i=0,1,+$, 
\begin{eqnarray*}
\gamma_{0}(\rho_{1}^{-1},\rho_{2}^{-1})
&=&\mu_{1}+\mu_{2}-\lambda_{1}^{(0)}-\lambda_{2}^{(0)}+\Bigl(1+\frac{\lambda_{1}^{(0)}}{\lambda}\Bigr)\mu_{1}r_{10}\rho_{1}+\Bigl(1+\frac{\lambda_{2}^{(0)}}{\lambda}\Bigr)\mu_{2}r_{20}\rho_{2}\\
&=&\mu_{1}+\mu_{2}-\lambda_{1}^{(0)}-\lambda_{2}^{(0)}+\Bigl(1+\frac{\lambda_{1}^{(0)}}{\lambda}\Bigr)\alpha_{1}r_{10}+\Bigl(1+\frac{\lambda_{2}^{(0)}}{\lambda}\Bigr)\alpha_{2}r_{20}\\
&=&\mu_{1}+\mu_{2} + \lambda + \lambda = 1,\\
\gamma_{1} (\rho_{1}^{-1} , \rho_{2}^{-1}) 
 &=& \mu_{2} - \lambda_{2}^{(1)} + \rho_{1}^{-1} \lambda + \mu_{1} r_{10} \rho_{1} + \left( 1 + \frac{\lambda_{2}^{(1)}}{\lambda} \right) \left( \mu_{2} r_{21} \rho_{1}^{-1} \rho_{2} + \mu_{2} r_{20} \rho_{2} \right)\\
 &=& \mu_{2} - \frac{r_{11}}{r_{12}} \lambda + \frac{\mu_{1}}{\alpha_{1}} \lambda + \lambda + \left( 1 + \frac{r_{11}}{r_{12}} \right) \left( \frac{\alpha_{2}}{\alpha_{1}} r_{21} \mu_{1} + r_{20} \alpha_{2} \right)\\
&=& \mu_{2} - \frac{r_{11}}{r_{12}} \lambda + \mu_{1} r_{10} + \lambda + \mu_{1} r_{12} + \lambda + \mu_{1} r_{11} + \frac{r_{11}}{r_{12}} \lambda= 1,\\\gamma_{+} ( \rho_{1}^{-1} , \rho_{2}^{-1} ) 
 &=& \mu_{1} r_{11} + \mu_{2} r_{22} + \mu_{1} r_{10} + \mu_{2} r_{20} +\lambda + \mu_{2} r_{21} + \lambda + \mu_{1} r_{12} = 1.
\end{eqnarray*}
By symmetry of $\gamma_{1}(\rho_{1}^{-1},\rho_{2}^{-1}) = 1$, we have $\gamma_{2}(\rho_{1}^{-1},\rho_{2}^{-1}) = 1$. 

\bibliographystyle{ims}
\bibliography{koba04042013}
\end{document}